\titleformat{\subsection}{\normalsize\itshape}{\thesubsection}{1em}{}
\providecommand{\bibinfo}[2]{#2}
\definecolor{gray}{rgb}{0.25, 0.25, 0.25}
\newtheorem{theorem}{Theorem}[section]
\newtheorem{lemma}[theorem]{Lemma}
\newtheorem{claim}{Claim}
\newtheorem*{observation*}{Observation}
\newtheorem*{problem*}{Problem}
\newtheorem*{question*}{Question}
\theoremstyle{definition}
\newtheorem{definition}[theorem]{Definition}
\newtheorem{fact}{Fact}
\newenvironment{definition*}
  {
   \innerdefinition}
  {\endinnerdefinition}
\theoremstyle{remark}
\title{\Large\textbf{The signless Laplacian spectral Tur\'an problems\\ for hypergraphs}\thanks{Research was partially supported by the National
		Nature Science Foundation of China (grant numbers 12331012, 12571375)} }
\author{}
\date{}
\author {Yongchun Lu$^{1}$,     Jiadong Wu$^{1}$,  Liying Kang$^{1,2}$\thanks{\em Corresponding author. Email address: lykang@shu.edu.cn (L. Kang), luyongchun@shu.edu.cn (Y. Lu), 1753381890@qq.com (J. Wu)} \\
	{\small $^{1}$ Department of Mathematics, Shanghai University,
		Shanghai 200444, P.R. China}\\
	{\small$^{2}$Newtouch Center for Mathematics of Shanghai University,
		Shanghai,  China, 200444}}
\begin{document}
\maketitle

\begin{abstract}
Let $\mathcal{H}=(V, E)$ be an $r$-uniform hypergraph on $n$ vertices. The signless Laplacian spectral radius  of $\mathcal{H}$ is defined as  the maximum modulus of the eigenvalues of the tensor $\mathcal{Q}(\mathcal{H})=\mathcal{D}(\mathcal{H})+\mathcal{A}(\mathcal{H})$, where $\mathcal{D}(\mathcal{H})$ and $\mathcal{A}(\mathcal{H})$ are the  degree diagonal tensor and the adjacency tensor of $\mathcal{H}$, respectively. In this paper, we establish a general theorem that extends the spectral Tur\'an result of Keevash, Lenz and Mubayi
[SIAM J. Discrete Math., 28 (4) (2014)]
 to the setting of signless Laplacian spectral Tur\'an problems.
We prove that if a family $\mathcal{F}$ of $r$-uniform hypergraphs  is degree-stable with respect to a family $\mathcal{H}_n$ of $r$-uniform hypergraphs and its extremal constructions satisfy certain natural assumptions, then the signless Laplacian spectral Tur\'an problem for $\mathcal{F}$  can be effectively reduced to
the corresponding problem restricted to the family $\mathcal{H}_n$.  As a concrete application, we completely determine the extremal hypergraph that maximizes the signless Laplacian spectral radius among all Fano plane-free $3$-uniform hypergraphs, showing that the unique extremal hypergraph is the balanced complete bipartite $3$-uniform hypergraph.

\end{abstract}
\begin{flushleft}
		\hspace{2.5em}\textbf{Keywords:} Signless Laplacian spectral radius; Fano plane; Stability; Extremal hypergraph
	\end{flushleft}

\section{Introduction}


For an integer $r\ge2$, an \emph{$r$-uniform hypergraph} (or simply \emph{$r$-graph}) $\mathcal{H}$ is a pair $(V, E)$, where $V$ denotes a finite set of vertices and $E$ is a collection of $r$-subsets of $V$.    Given a family $\mathcal{F}$ of $r$-graphs, an $r$-graph  $\mathcal{H}$ is said  to be \emph{$\mathcal{F}$-free} if it  contains no member of  $\mathcal{F}$ as a subgraph. The classical Tur\'an problem seeks the maximum number of edges in an $\mathcal{F}$-free
$r$-graph on $n$ vertices. A core concept associated with this problem is the \emph{Tur\'an number}, denoted by $\operatorname{ex}_r(n, \mathcal{F})$, which is defined as the maximum number of edges in an $\mathcal{F}$-free $r$-graph on $n$ vertices. The \emph{Tur\'an density} $\pi(\mathcal{F})$ of $\mathcal{F}$ is defined as
$$\pi(\mathcal{F}) = \lim_{n \to \infty}\frac{\operatorname{ex}_r(n, \mathcal{F})}{{\binom{n}{r}}}.$$
An $r$-graph $\mathcal{H}$ is called \emph{$k$-colorable} if its vertex set $V(\mathcal{H})$ can be partitioned into $k$ subsets such that no subset contains an edge of $\mathcal{H}$.

 Let $\mathcal{H}$ be  an $r$-graph  on $n$ vertices.
For every vertex $v \in V(\mathcal{H})$, the \emph{link} of $v$ is defined as \scalebox{0.95}{$L_{\mathcal{H}}(v)=\{A \subseteq V(\mathcal{H}) \mid A \cup\{v\} \in E(\mathcal{H})\}$}, and the \emph{degree} of $v$ is $d_{\mathcal{H}}(v)=|L_\mathcal{H}(v)|$.
The \emph{minimum degree} of $\mathcal{H}$ is defined as \scalebox{0.95}{$\delta(\mathcal{H})=\min \{d_\mathcal{H}(v) \mid v\in V(\mathcal{H})\}$}.
The \emph{ordered link} of $v$, denoted by $L_{\mathcal{H}}^o(v)$, is defined as the set of all $(r-1)$-tuples $(u_1, \ldots, u_{r-1}) \in V(\mathcal{H})^{r-1}$ such that $\{u_1, \ldots, u_{r-1}, v\} \in E(\mathcal{H})$. Let $E_\mathcal{H}(v)$ denote the set of edges incident to $v$.
When the context is clear, we abbreviate these notations as $L(v)$, $d(v)$, $L^o(v)$, and $E_v$, respectively. Let $E_v^{r-1}$ denote the collection of all $(r-1)$-subsets of $V(\mathcal{H}) \setminus \{v\}$. Let $V_v^{r-1}$ denote the set of all $(r-1)$-tuples with entries in $V(\mathcal{H}) \setminus \{v\}$. Let $x^{\bm{\nu}}=x_{i_1}x_{i_2}\cdots x_{i_{r-1}}$ for an $(r-1)$-tuple $\bm{\nu}=(i_1, i_2, \ldots, i_{r-1})\in V_v^{r-1}$.
   For a graph $F$, let $F^{(r)}$ be an $r$-graph obtained from $F$ by enlarging each edge of $F$
with $(r - 2)$ new vertices disjoint from $V (F)$ such that distinct edges of $F$
are enlarged by distinct vertices.

Extremal graph theory constitutes a cornerstone of combinatorics, with Tur\'an-type problems forming one of its central themes. This research area traces its origins to Tur\'an's seminal theorem~\cite{turan1941extremal}, which asserts that $\operatorname{ex}(n, K_{k+1}) = |E(T_{n, k})|$. Here, the \emph{Tur\'an graph} $T_{n, k}$ is defined as the complete   $k$-partite graph on $n$   vertices where the partition classes are as equal in size as possible.


Motivated by the classical Tur\'an problem, the spectral Tur\'an problem has emerged as a topic of considerable interest in combinatorics. This problem investigates extremal graph structures through the lens of eigenvalues-most commonly those of the adjacency or signless Laplacian tensor (matrix). As a natural extension of the classical Tur\'an problem, a central research objective in this area is to derive spectral analogues of classic edge-extremal results. To introduce the relevant literature, we first give some basic concepts.

Let $\mathcal{A}$ be an order $r\geq 2$, dimension $n$ tensor, $\bm{x}=(x_1,x_2,\ldots,x_n)^{\mathrm{T}}$ be a column
vector of dimension $n$. Then $\mathcal{A}\bm{x}$ is a vector in $\mathbb{C}^n$, whose
$i$-{\em th} component is as the following
\[
(\mathcal{A}\bm{x})_i=\sum_{i_2,\ldots,i_r=1}^na_{ii_2\cdots i_r}x_{i_2}\cdots x_{i_r},~~
i\in [n]
\]
and
\[
\bm{x}^{\mathrm{T}}(\mathcal{A}\bm{x})=\sum_{i_1,i_2,\ldots,i_r=1}^n
a_{i_1i_2\cdots i_r}x_{i_1}x_{i_2}\cdots x_{i_r}.
\]

In 2005, Lim \cite{Lim} and Qi \cite{Qi2005} independently introduced the concepts of
tensor eigenvalues and the spectra of tensors. Let $\mathcal{A}$ be an order $r$ and
dimension $n$ tensor, $\bm{x}=(x_1,x_2\ldots,x_n)^{\mathrm{T}}\in\mathbb{C}^n$
be a column vector of dimension $n$. If there exists a number $\lambda\in\mathbb{C}$
and a nonzero vector $\bm{x}\in\mathbb{C}^{n}$ such that
\[
\mathcal{A}\bm{x}=\lambda \bm{x}^{[r-1]},
\]
then $\lambda$ is called an {\em eigenvalue} of $\mathcal{A}$, $\bm{x}$ is called
an {\em eigenvector} of $\mathcal{A}$ corresponding to the eigenvalue $\lambda$,
where $\bm{x}^{[r-1]}=(x_1^{r-1},x_2^{r-1},\ldots,x_n^{r-1})^{\mathrm{T}}$. The
{\em spectral radius} $\rho(\mathcal{A})$ of $\mathcal{A}$ is the maximum modulus
of the eigenvalues of $\mathcal{A}$. It was proved that $\lambda$ is an eigenvalue
of $\mathcal{A}$ if and only if it is a root of the characteristic polynomial of
$\mathcal{A}$ (see details in \cite{Shao:Connected odd-bipartite}).

The following result for nonnegative tensors is stated as a part of Perron-Frobenius theorem in \cite{K.C.Chang.etc:Perron-Frobenius Theorem}.
\begin{theorem}[\cite{K.C.Chang.etc:Perron-Frobenius Theorem}]
\label{thm:Perron-Frobenius}
Let $\mathcal{A}$ be a nonnegative tensor of order $r$ and dimension $n$. Then we have
the following statements.
\begin{enumerate}
\item $\rho(\mathcal{A})$ is an eigenvalue of $\mathcal{A}$ with a nonnegative
eigenvector corresponding to it.

\item If $\mathcal{A}$ is weakly irreducible, then $\rho(\mathcal{A})$ is the
unique eigenvalue of $\mathcal{A}$ with the unique eigenvector $\bm{x}\in\mathbb{R}_{++}^n$,
up to a positive scaling coefficient.
\end{enumerate}
\end{theorem}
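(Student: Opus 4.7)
The plan is to establish (1) via a topological fixed-point argument applied to a normalized version of the eigenvalue map, and then deduce (2) from a propagation argument along the representing digraph of $\mathcal{A}$.

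For (1), I would first handle the case of a strictly positive tensor and then pass to general nonnegative tensors by perturbation. When every entry of $\mathcal{A}$ is positive, the map
$$T(\bm{x})=\frac{(\mathcal{A}\bm{x})^{[1/(r-1)]}}{\sum_{i=1}^n\bigl((\mathcal{A}\bm{x})_i\bigr)^{1/(r-1)}}$$
is a continuous self-map of the standard simplex $\Delta=\{\bm{x}\ge 0:\sum_i x_i=1\}$, since $\mathcal{A}\bm{x}$ is entrywise positive on $\Delta$. Brouwer's fixed-point theorem then yields $\bm{x}^*\in\Delta$ satisfying $\mathcal{A}\bm{x}^*=\lambda(\bm{x}^*)^{[r-1]}$ for some $\lambda>0$. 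For a general nonnegative $\mathcal{A}$, apply the positive case to $\mathcal{A}_\varepsilon=\mathcal{A}+\varepsilon\mathcal{J}$ (with $\mathcal{J}$ the all-ones tensor) to obtain eigenpairs $(\lambda_\varepsilon,\bm{x}_\varepsilon)$ with $\bm{x}_\varepsilon\in\Delta$, then extract a convergent subsequence as $\varepsilon\to 0^+$ by compactness of $\Delta$; the limit $(\lambda^*,\bm{x}^*)$ is a nonnegative eigenpair of $\mathcal{A}$. To identify $\lambda^*$ with $\rho(\mathcal{A})$, I would then invoke the Collatz--Wielandt-type variational formula
$$\rho(\mathcal{A})=\sup_{\bm{x}>0}\min_{i\in[n]}\frac{(\mathcal{A}\bm{x})_i}{x_i^{r-1}},$$
bounding both sides against $\lambda^*$ and against any other eigenvalue through its eigenvector.

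For (2), let $G_\mathcal{A}$ denote the representing digraph on $[n]$ whose arc set contains $(i,j)$ precisely when there exists an entry $a_{i i_2\cdots i_r}>0$ with $j\in\{i_2,\ldots,i_r\}$; weak irreducibility means $G_\mathcal{A}$ is strongly connected. Starting from the nonnegative eigenvector $\bm{x}^*$ produced in~(1), if some $x_i^*=0$ then $(\mathcal{A}\bm{x}^*)_i=\lambda(x_i^*)^{r-1}=0$ forces $x_j^*=0$ for every out-neighbor $j$ of $i$ in $G_\mathcal{A}$, and strong connectivity propagates the vanishing to every coordinate, contradicting $\bm{x}^*\neq 0$. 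Hence $\bm{x}^*\in\mathbb{R}_{++}^n$. For uniqueness up to positive scaling, if $\bm{y}\in\mathbb{R}_{++}^n$ also satisfies $\mathcal{A}\bm{y}=\mu\bm{y}^{[r-1]}$, evaluating the two eigenvalue equations at indices attaining $\min_i x_i^*/y_i$ and $\max_i x_i^*/y_i$ yields $\lambda^*\le\mu\le\lambda^*$; the equality cases combined with strong connectivity of $G_\mathcal{A}$ force the ratio $x_i^*/y_i$ to be constant in $i$.

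The chief obstacle is the nonlinearity of the eigenvalue equation $\mathcal{A}\bm{x}=\lambda\bm{x}^{[r-1]}$: unlike in the matrix setting, linear spectral theory is unavailable, so I must deploy a topological fixed-point theorem to secure existence in~(1) and craft a tensor-tailored propagation argument along $G_\mathcal{A}$ to exploit weak irreducibility in~(2). A secondary subtlety is verifying that the eigenvalue produced by the fixed-point argument is actually the spectral radius rather than some smaller nonnegative eigenvalue, which is where the Collatz--Wielandt characterization is essential.
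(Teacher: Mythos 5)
This is a cited theorem: the paper states it as a result of Chang, Pearson, and Zhang and gives no proof of its own, so there is no internal argument to compare your sketch against. Your outline does track the standard route in the tensor Perron--Frobenius literature (Brouwer fixed point on the simplex for the positive case, perturbation by $\varepsilon\mathcal{J}$ plus compactness for the general nonnegative case, a Collatz--Wielandt variational characterization to identify the limit eigenvalue with $\rho(\mathcal{A})$, and a digraph-propagation argument for positivity and uniqueness under weak irreducibility), which is essentially how the cited source and its successors (Yang--Yang, Friedland--Gaubert--Han) proceed.

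Two points to watch if you were to flesh this out. First, the Collatz--Wielandt formula $\rho(\mathcal{A})=\sup_{\bm{x}>0}\min_i (\mathcal{A}\bm{x})_i/x_i^{r-1}$ is not free; for nonnegative (as opposed to weakly irreducible) tensors it requires its own argument, typically via the same perturbation-and-limit device you already use, and you need to be careful that the $\sup$ is attained and that it really dominates the modulus of every complex eigenvalue, which uses a triangle-inequality step $|\lambda|\,|x_i|^{r-1}\le(\mathcal{A}|\bm{x}|)_i$ applied to an eigenvector $\bm{x}$ of $\lambda$. Second, in the uniqueness step the inequality chain at the index attaining $t=\min_i x_i^*/y_i$ gives $\lambda^*\ge\mu$ only after you use monotonicity of $\bm{x}\mapsto\mathcal{A}\bm{x}$ and the homogeneity $\mathcal{A}(t\bm{y})=t^{r-1}\mathcal{A}\bm{y}$; the equality case then needs the propagation step you mention, but spelled out it requires that every arc $(k,j)$ with $a_{ki_2\cdots i_r}>0$ and $j\in\{i_2,\ldots,i_r\}$ forces $x_j^*=t y_j$, and only then does strong connectivity of $G_\mathcal{A}$ close the argument. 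Both gaps are fillable and do not change the structure of your proof, but they are where the real work in the cited paper lives.
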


\begin{theorem}
[\cite{Qi2013}]
\label{relaigh}
Let $\mathcal{A}$ be a nonnegative symmetric tensor of order $r$ and dimension $n$.
Then we have
\[
\rho(\mathcal{A})=\max\left\{\bm{x}^{\mathrm{T}}(\mathcal{A}\bm{x})\,|\, x\in\mathbb{R}_{+}^{n},
||\bm{x}||_r=1\right\}.
\]
Furthermore, $\bm{x}\in\mathbb{R}_{+}^{n}$ with $||\bm{x}||_r=1$ is an optimal
solution of the above optimization problem if and only if it is an eigenvector of
$\mathcal{A}$ corresponding to the eigenvalue $\rho(\mathcal{A})$.
\end{theorem}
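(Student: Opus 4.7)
The plan is to show that the variational value $M := \max\{\bm{x}^{\mathrm{T}}(\mathcal{A}\bm{x}) : \bm{x} \in \mathbb{R}_{+}^{n},\, \|\bm{x}\|_r = 1\}$ coincides with $\rho(\mathcal{A})$ by proving the two matching inequalities, and to extract the eigenvector characterization as a byproduct of the proof of the upper bound. Since the objective $\bm{x}^{\mathrm{T}}(\mathcal{A}\bm{x})$ is continuous on the compact set $\{\bm{x} \geq 0 : \sum_i x_i^r = 1\}$, the supremum $M$ is attained at some $\bm{x}^* \geq 0$; write $S = \{i : x_i^* > 0\}$ for its support.

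The core step is to prove $M \leq \rho(\mathcal{A})$ by showing that any maximizer $\bm{x}^*$ is itself an eigenvector of $\mathcal{A}$ with eigenvalue $M$. Restricted to the coordinates in $S$, the vector $\bm{x}^*$ is an \emph{interior} maximizer of the same variational problem, so a standard Lagrange multiplier argument applies: using the symmetry of $\mathcal{A}$ to compute $\partial/\partial x_i\,[\bm{x}^{\mathrm{T}}(\mathcal{A}\bm{x})] = r(\mathcal{A}\bm{x})_i$ and $\partial/\partial x_i\,[\sum_j x_j^r] = r x_i^{r-1}$, we obtain $(\mathcal{A}\bm{x}^*)_i = \lambda (x_i^*)^{r-1}$ for all $i \in S$ and some multiplier $\lambda$; multiplying by $x_i^*$, summing, and invoking the normalization yields $\lambda = \bm{x}^{*\mathrm{T}}(\mathcal{A}\bm{x}^*) = M$. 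For $i \notin S$ the Lagrange equality does not come for free, so I would consider the one-sided perturbation $\bm{x}(t) := (1+t^r)^{-1/r}(\bm{x}^* + t\bm{e}_i)$ for $t > 0$, which lies in the feasible set. The leading Taylor term of $\bm{x}(t)^{\mathrm{T}}(\mathcal{A}\bm{x}(t))$ is $rt(\mathcal{A}\bm{x}^*)_i$, and optimality forces $(\mathcal{A}\bm{x}^*)_i \leq 0$; combined with $\mathcal{A} \geq 0$ and $\bm{x}^* \geq 0$ this yields $(\mathcal{A}\bm{x}^*)_i = 0 = M(x_i^*)^{r-1}$. Hence $\mathcal{A}\bm{x}^* = M(\bm{x}^*)^{[r-1]}$, so $M$ is an eigenvalue of $\mathcal{A}$ with nonnegative eigenvector, and $M \leq \rho(\mathcal{A})$ by the very definition of the spectral radius.

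For the reverse inequality, invoke Theorem~\ref{thm:Perron-Frobenius}(1) to obtain a nonnegative eigenvector $\bm{y}$ associated with $\rho(\mathcal{A})$, rescale so that $\|\bm{y}\|_r = 1$, and compute $\bm{y}^{\mathrm{T}}(\mathcal{A}\bm{y}) = \rho(\mathcal{A})\sum_i y_i^r = \rho(\mathcal{A})$, which shows $M \geq \rho(\mathcal{A})$. Combining the two inequalities yields $M = \rho(\mathcal{A})$, and the argument of the previous paragraph simultaneously establishes that every maximizer is a nonnegative eigenvector for $\rho(\mathcal{A})$; the converse direction of the \emph{furthermore} clause is immediate from the same computation that produced the lower bound. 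I expect the main obstacle to be precisely the boundary case $i \notin S$: the standard Lagrange/KKT machinery degenerates there because the constraint gradient $r x_i^{r-1}$ vanishes, and resolving this requires both the one-sided perturbation argument and the nonnegativity hypothesis on $\mathcal{A}$ to force $(\mathcal{A}\bm{x}^*)_i = 0$. This is exactly the step where the tensor hypothesis is essential; without nonnegativity the conclusion fails in general.
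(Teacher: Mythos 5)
The paper itself gives no proof of this theorem -- it is imported verbatim as a cited result from Qi (2013) -- so there is no ``paper's proof'' to compare against. Judged on its own, your argument is correct and is essentially the standard variational proof of this result. The upper bound $M \le \rho(\mathcal{A})$ is handled properly: on the support $S$ of a maximizer $\bm{x}^*$ the Lagrange/KKT conditions give $(\mathcal{A}\bm{x}^*)_i = M (x_i^*)^{r-1}$ (using symmetry to get $\partial_i[\bm{x}^{\mathrm T}\mathcal{A}\bm{x}] = r(\mathcal{A}\bm{x})_i$, and multiplying by $x_i^*$ and summing to identify the multiplier as $M$), while off the support your one-sided perturbation $\bm{x}(t) = (1+t^r)^{-1/r}(\bm{x}^* + t\bm{e}_i)$ yields $(\mathcal{A}\bm{x}^*)_i \le 0$ from the linear-in-$t$ term (the $t^r$ correction from the normalization is of strictly higher order since $r \ge 2$), and nonnegativity of $\mathcal{A}$ and $\bm{x}^*$ then forces $(\mathcal{A}\bm{x}^*)_i = 0 = M(x_i^*)^{r-1}$. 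Thus $M$ is an eigenvalue with a nonnegative eigenvector, and since $M = \bm{x}^{*\mathrm T}(\mathcal{A}\bm{x}^*) \ge 0$ we get $M \le \rho(\mathcal{A})$. The lower bound via Theorem~\ref{thm:Perron-Frobenius}(1) and the ``furthermore'' equivalence both follow as you state. You have also correctly flagged the only genuinely delicate point, namely the degeneracy of the constraint gradient at boundary coordinates and the role nonnegativity plays in closing that gap.
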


In 2012, Cooper and Dutle \cite{cooper2012spectra} defined the
adjacency tensors for $r$-uniform hypergraphs.

\begin{definition}[\cite{cooper2012spectra}]
Let $\mathcal{H}=(V, E)$ be an $r$-uniform hypergraph on $n$ vertices. The adjacency tensor of $\mathcal{H}$ is defined as the order $r$ and dimension $n$ tensor $\mathcal{A}(\mathcal{H})=\left(a_{i_1 i_2 \ldots i_r}\right)$, whose ($i_1 i_2 \ldots i_r$)-entry is
$$
a_{i_1 i_2 \ldots i_r}= \left\{\begin{array}{cl}
\frac{1}{(r-1)!}, & \text { if }\left\{v_{i_1}, v_{i_2}, \ldots, v_{i_r}\right\} \in E(\mathcal{H}), \\[8pt]
0, & \text { otherwise}.
\end{array}\right.
$$
\end{definition}

Let $\mathcal{D}(\mathcal{H})$ be the \emph{diagonal degree tensor} of order $r$ and dimension $n$, where the diagonal entries are given by $d_{ii\ldots i} = \sum_{i_2, \ldots, i_r=1}^n a_{i i_2 \ldots i_r}$ for all $i \in [n]$, and all other entries are zero.
Consequently, the \emph{signless Laplacian tensor} of $\mathcal{H}$ is defined as $\mathcal{Q}(\mathcal{H})=\mathcal{D}(\mathcal{H})+\mathcal{A}(\mathcal{H})$. Moreover, we shall write $\lambda(\mathcal{H})$ and $q(\mathcal{H})$ for the spectral radius of the adjacency tensor and the signless Laplacian tensor of an $r$-graph $\mathcal{H}$, respectively. Let $\mathfrak{H}$ be a family of $r$-graphs. We define $q(\mathfrak{H})=\max\{q(\mathcal{H})\ |\ \mathcal{H}\in\mathfrak{H}\}$.
For the case $r=2$, these definitions reduce to the classical definitions for graphs.

In \cite{Friedland}, the weak irreducibility of nonnegative tensors was defined. It
was proved that an $r$-uniform hypergraph $H$ is connected if and only if its adjacency
tensor $\mathcal{A}(H)$ is weakly irreducible (see \cite{Friedland}). Clearly, this shows that if $H$
is connected, then $\mathcal{A}(H)$ and $\mathcal{Q}(H)$ are all
weakly irreducible.

We now list some classical results on spectral Tur\'an problems of graphs.
In 1986, Wilf \cite{wilf1986spectral} demonstrated that if $G$ is an $n$-vertex $K_{k+1}$-free graph, then $\lambda(G)\le(1-1/k)n$.
Motivated by this, Nikiforov~\cite{nikiforov2007bounds} and Guiduli~\cite{guiduli1996spectral} independently proved that such a graph $G$ must satisfy $\lambda(G) \leq \lambda\left(T_{n, k}\right)$, thereby extending Tur\'an's Theorem to the spectral version.
In 2013, Abreu and Nikiforov~\cite{deabreu2013maxima} proved that if $G$ is an $n$-vertex $K_{k+1}$-free graph, then $q(G) \leq\left(1-1/k\right) 2 n$.
Furthermore, He, Jin, and Zhang~\cite{he2013sharp} showed that such a graph $G$ satisfies $q(G) \leq q\left(T_{n, k}\right)$, thereby obtaining a spectral version of Tur\'an's theorem in terms of the signless Laplacian spectral radius. We say that a graph $F$ is color-critical if there exists an edge $e$ of $F$ such that $\chi(F-e)<\chi(F)$, where $F-e$ denotes the graph obtained from $F$ by deleting the edge $e$.
Let  $F$  be a color-critical graph with   $\chi(F)=r+1$. For sufficiently large $n$, Simonovits \cite{simonovits1968method} determined the Tur\'an number of $F$, proving that the unique extremal graph on $n$ vertices is $T_{n,r}$. Nikiforov \cite{nikiforov2009spectral} established a spectral counterpart to this result, showing that $T_{n,r}$ attains the maximum adjacency spectral radius among all $F$-free graphs on $n$ vertices. Recently, Zheng, Li, and Li \cite{zheng2026signless} settled the signless Laplacian spectral extremal problem for $F$, determining the maximum signless Laplacian spectral radius explicitly. For more related information, we refer the reader to \cite{he2013sharp, li2022survey, nikiforov2011some, wang2023conjecture, zheng2025some}.

Regarding the spectral Tur\'an problem of hypergraphs, Keevash, Lenz, and Mubayi~\cite{keevash2014spectral}  introduced two general criteria for the $\alpha$-spectral radius of $r$-uniform hypergraphs (which reduces to the adjacency spectral radius when $\alpha=r$)
that can be applied to obtain a variety of spectral Tur\'an-type results. In particular,
they determined the maximum $\alpha$-spectral radius of any $3$-graph on $n$ vertices not
containing the Fano plane when $n$ is sufficiently large.  Extending the spectral Mantel's theorem to hypergraphs,
Ni, Liu and Kang \cite{Ni-Liu-Kang2024} determined the maximum $\alpha$-spectral radius
of $\{F_4, F_5\}$-free $3$-graphs, and characterized the extremal hypergraph,
where $F_4 = \{abc, abd, bcd\}$ and $F_5 = \{abc, abd, cde\}$. Recently, Zheng,
Li, and Fan \cite{Zheng-Li-Fan2024} established the maximum $\alpha $-spectral radius
of $K_{k+1}^{(r)}$-free $r$-graphs by leveraging results from Keevash, Lenz, and
Mubayi \cite{keevash2014spectral}, and further characterized the extremal
hypergraph attaining this bound. Hou, Liu, and Zhao \cite{hou2024criterion} established a simple criterion for degree-stability of hypergraphs. An immediate application of this result, combined with the general theorem by Keevash, Lenz, and Mubayi~\cite{keevash2014spectral}, solves the $\alpha$-spectral Tur\'an problems for a large class of hypergraphs.

Research has also been conducted on spectral Tur\'an-type problems over specific
classes of hypergraphs.
Gao, Chang and Hou \cite{GaoChangHou2022}
investigated the spectral extremal problem for $K_{r+1}^{(r)}$-free $r$-graphs among linear
hypergraphs. They proved that the spectral radius of an $n$-vertices
$K_{r+1}^{(r)}$-free linear $r$-graph is no more than $n/r$ when $n$ is sufficiently large.
Generalizing Gao, Chang and Hou's result, She, Fan, Kang and Hou \cite{She-Fan-Kang-Hou2023}
presented sharp (or asymptotic) bounds of the spectral radius of $F^{(r)}$-free linear
$r$-graphs by establishing the connection between the spectral radius of linear
hypergraphs and those of their shadow graphs.  Here, $F$ is a graph  with chromatic
number $k \geq r +1$.
Another relevant result,
due to Ellingham, Lu and Wang \cite{Ellingham-Lu-Wang2022}, showed that the
$n$-vertex outerplanar $3$-graph of maximum spectral radius is the unique $3$-graph
whose shadow graph is the join of an isolated vertex and the path $P_{n-1}$.

Despite these advances,   the signless Laplacian spectral version has received much less attention, partly because of the additional structural information encoded in the degree tensor.  In this paper, we establish a criterion for the signless Laplacian spectral Tur\'an problem of hypergraphs, as stated in Theorem~\ref{criterion}. Furthermore, as an application of Theorem~\ref{criterion}, we determine the maximum signless Laplacian spectral radius among all Fano plane-free $3$-graphs on $n$ vertices.

\section{Main results}
We establish a general criterion which, combined with further arguments, enables us to derive exact signless Laplacian spectral Tur\'an results for certain hypergraphs.
\begin{theorem}\label{criterion}
Let $\mathcal{F}$ be a family of $r$-graphs with $\pi(\mathcal{F}) > 1/2$. Let $\varepsilon$ and $\sigma$ be sufficiently small numbers such that $\varepsilon \gg \sigma > 0$. Denote by $\mathcal{H}_n$ the set of all $\mathcal{F}$-free $r$-graphs on $n$ vertices with minimum degree greater than $\bigl( \pi(\mathcal{F})/(r-1)! - \varepsilon \bigr) n^{r-1}$. Suppose that there exists $N > 0$ such that for every $n > N$, we have
\begin{equation}
  \biggl| \mathrm{ex}_r(n, \mathcal{F}) - \mathrm{ex}_r(n-1, \mathcal{F}) - \frac{\pi(\mathcal{F})}{(r-1)!} n^{r-1} \biggr| \leq \sigma n^{r-1} \label{condition 1}
\end{equation}
and
\begin{equation}
  \biggl| q(\mathcal{H}_n) - 2r \frac{\mathrm{ex}_r(n, \mathcal{F})}{n} \biggr| \leq \sigma n^{r-2}. \label{condition 2}
\end{equation}
Then there exists $n_0 > 0$ such that for any $\mathcal{F}$-free $r$-graph $\mathcal{G}$ on $n \geq n_0$ vertices, we have
\[
  q(\mathcal{G}) \leq q(\mathcal{H}_n).
\]
In addition, the equality holds only if $\mathcal{G} \in \mathcal{H}_n$.
\end{theorem}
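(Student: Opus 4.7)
The plan is to argue by contradiction via an iterated vertex-deletion scheme, mirroring the strategy Keevash--Lenz--Mubayi used for the $\alpha$-spectral radius but adapted to the signless Laplacian tensor $\mathcal{Q}(\mathcal{H})$. Let $\mathcal{G}$ be an $\mathcal{F}$-free $r$-graph on $n$ vertices with $q(\mathcal{G}) \ge q(\mathcal{H}_n)$, and suppose for contradiction that $\mathcal{G} \notin \mathcal{H}_n$, so there exists $v_1 \in V(\mathcal{G})$ with $d_{\mathcal{G}}(v_1) \le (\pi(\mathcal{F})/(r-1)! - \varepsilon) n^{r-1}$. Set $\mathcal{G}_0 = \mathcal{G}$ and iteratively define $\mathcal{G}_i = \mathcal{G}_{i-1} - v_i$, taking $v_i$ to be a minimum-degree vertex of $\mathcal{G}_{i-1}$ and continuing exactly while $\delta(\mathcal{G}_{i-1}) \le (\pi(\mathcal{F})/(r-1)! - \varepsilon)(n-i+1)^{r-1}$. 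Let $k \ge 1$ be the first index with $\mathcal{G}_k \in \mathcal{H}_{n-k}$; by construction each $v_i$ satisfies $d_{\mathcal{G}_{i-1}}(v_i) \le (\pi(\mathcal{F})/(r-1)! - \varepsilon)(n-i+1)^{r-1}$, giving $E_k := |E(\mathcal{G})| - |E(\mathcal{G}_k)| \le (\pi(\mathcal{F})/(r-1)! - \varepsilon) \sum_{i=1}^k (n-i+1)^{r-1}$, while $q(\mathcal{G}_k) \le q(\mathcal{H}_{n-k})$ by the definition of $\mathcal{H}_{n-k}$.

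\textbf{Loss bound and closing.} Let $\bm{x}$ be the positive unit Perron eigenvector of $\mathcal{Q}(\mathcal{G})$ (Theorem~\ref{thm:Perron-Frobenius}) and let $\bm{x}^{(k)}$ be its restriction to $V(\mathcal{G}_k)$, with $c_k^r := \|\bm{x}^{(k)}\|_r^r = 1 - \sum_{i=1}^k x_{v_i}^r$. Writing $V_k = \{v_1, \ldots, v_k\}$ and expanding $q(\mathcal{G}) = \bm{x}^\top \mathcal{Q}(\mathcal{G}) \bm{x}$ yields
\[
q(\mathcal{G}) - \bm{x}^{(k)\top} \mathcal{Q}(\mathcal{G}_k) \bm{x}^{(k)} = T_k := \!\!\sum_{\substack{e \in E(\mathcal{G}) \\ e \cap V_k \ne \emptyset}}\!\! \Bigl(\sum_{v \in e} x_v^r + r \prod_{v \in e} x_v\Bigr).
\]
Theorem~\ref{relaigh} gives $\bm{x}^{(k)\top} \mathcal{Q}(\mathcal{G}_k) \bm{x}^{(k)} \le c_k^r\, q(\mathcal{G}_k)$, so $q(\mathcal{G}) - q(\mathcal{G}_k) \le T_k - q(\mathcal{G}_k) \sum_i x_{v_i}^r$. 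Charging each edge in $T_k$ to its earliest vertex in $V_k$ and applying AM--GM ($r \prod_{v \in e} x_v \le \sum_{v \in e} x_v^r$) converts this into
\[
q(\mathcal{G}) - q(\mathcal{G}_k) \le \sum_{i=1}^k \bigl( 2\, d_{\mathcal{G}_{i-1}}(v_i) - q(\mathcal{G}_k) \bigr) x_{v_i}^r + 2(r-1)\, E_k\, x_{\max}^r.
\]
Since $d_{\mathcal{G}_{i-1}}(v_i) \le (\pi(\mathcal{F})/(r-1)! - \varepsilon) n^{r-1}$ and $q(\mathcal{G}_k) \ge q(\mathcal{H}_{n-k}) \ge 2\pi(\mathcal{F})(n-k)^{r-1}/(r-1)! - O(\sigma n^{r-2})$, the coefficients $2 d_{\mathcal{G}_{i-1}}(v_i) - q(\mathcal{G}_k)$ are non-positive once $n - k$ is close enough to $n$, so the first sum is $\le 0$ and we obtain the clean bound $q(\mathcal{G}) - q(\mathcal{G}_k) \le 2(r-1)\, E_k\, x_{\max}^r$. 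Telescoping~\eqref{condition 1} yields $\mathrm{ex}_r(n, \mathcal{F}) - \mathrm{ex}_r(n-k, \mathcal{F}) = (\pi(\mathcal{F})/(r-1)! \pm \sigma) \sum_{i=1}^k (n-i+1)^{r-1}$, and substituting into~\eqref{condition 2} gives $q(\mathcal{H}_n) - q(\mathcal{H}_{n-k}) \ge 2\pi(\mathcal{F})[n^{r-1} - (n-k)^{r-1}]/(r-1)! - O(\sigma k n^{r-2})$. The identity $(r-1)/(r-1)! = 1/(r-2)!$ makes the leading $\pi(\mathcal{F})$-coefficients in loss and gap match exactly, so the $\varepsilon$-deficit in $E_k$ beats the $\sigma$-errors (because $\varepsilon \gg \sigma$), producing the strict inequality $q(\mathcal{G}) - q(\mathcal{G}_k) < q(\mathcal{H}_n) - q(\mathcal{H}_{n-k})$. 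Combined with $q(\mathcal{G}_k) \le q(\mathcal{H}_{n-k})$ this forces $q(\mathcal{G}) < q(\mathcal{H}_n)$, contradicting our assumption, so $\mathcal{G} \in \mathcal{H}_n$ and the equality characterization follows.

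\textbf{Main obstacle.} The hardest step is to control the Perron vector $\bm{x}$ quantitatively, in particular to establish $x_{\max}^r = (1 + o(1))/n$ uniformly throughout the iteration; any weaker bound would inflate the loss estimate past the gap. The hypothesis $\pi(\mathcal{F}) > 1/2$ is decisive here: combined with the trivial upper bound $q(\mathcal{G}) \le 2\Delta(\mathcal{G})$ and the lower bound $q(\mathcal{G}) \ge q(\mathcal{H}_n) \approx 2\pi(\mathcal{F})\binom{n-1}{r-1}$, it rules out concentration of $\bm{x}$ on a single vertex (since $2\pi(\mathcal{F}) > 1$) and, together with the eigenvalue equation $(q(\mathcal{G}_{i-1}) - d(v_i)) x_{v_i}^{r-1} = \sum_{e \ni v_i} \prod_{u \in e \setminus \{v_i\}} x_u$, forces a sufficient spread of $\bm{x}$ to yield the desired bound on $x_{\max}$. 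A secondary difficulty is that when $k$ is not small relative to $n$ the first sum above need not be non-positive; this is handled by comparing loss and gap directly via the convexity inequality $(r-1)(1 - \alpha^r) < r(1 - \alpha^{r-1})$ for $\alpha \in (0, 1)$. Making all these estimates quantitative while keeping the additive errors bounded by $O(\sigma)$ is the most delicate part of the proof.
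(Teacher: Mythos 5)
Your overall strategy (iterated vertex deletion to reduce to $\mathcal{H}_m$) shares a family resemblance with the paper's argument, but you make two choices that diverge from it in ways that introduce genuine gaps, and one of your displayed inequalities is a logical error.

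\textbf{The central gap: which vertex you delete, and the $x_{\max}$ bound.} You delete a minimum-\emph{degree} vertex at each step and bound the loss through $x_{\max}^r$, which you acknowledge you cannot bound without a hard additional step, namely $x_{\max}^r=(1+o(1))/n$. This bound is not established and in fact is too strong to be true in general: the eigenequation at the maximizing vertex, combined with H\"older and the hypothesis $q(\mathcal{G})\ge q(\mathcal{H}_n)$, only yields $x_{\max}^r=O\bigl((2\pi(\mathcal{F})-1)^{-r/(r-1)}/n\bigr)$, i.e. a constant that can be substantially larger than $1$ (for the Fano plane, $2\pi-1=\tfrac12$, giving $x_{\max}^r\lesssim 2^{3/2}/n$). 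Your closing step requires $C<1+O(\varepsilon)$ so that the per-step loss $2(r-1)E_k x_{\max}^r$ is beaten by the gap $q(\mathcal{H}_n)-q(\mathcal{H}_{n-k})$, so the argument does not close. The paper sidesteps this entirely by deleting the vertex $w$ with the \emph{minimum eigenvector entry}, for which $x_w^r\le 1/n$ holds trivially (the min is at most the average), and sharpens this to $x_w^r<(1-\varepsilon)/n$ in Lemma~\ref{min} using the low-degree hypothesis; no control over $x_{\max}$ is ever needed. Choosing the deleted vertex by eigenvector entry rather than by degree is precisely the idea that makes the loss estimate of Lemma~\ref{calcu} work.

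\textbf{A logical error and a non-working coefficient claim.} You correctly note early on that $\mathcal{G}_k\in\mathcal{H}_{n-k}$ gives $q(\mathcal{G}_k)\le q(\mathcal{H}_{n-k})$, yet later write $q(\mathcal{G}_k)\ge q(\mathcal{H}_{n-k})$ to make the coefficients $2d_{\mathcal{G}_{i-1}}(v_i)-q(\mathcal{G}_k)$ non-positive; these are incompatible unless $\mathcal{G}_k$ is extremal, which is not given. The best available lower bound is the Rayleigh-quotient bound $q(\mathcal{G}_k)\ge 2r\,e(\mathcal{G}_k)/(n-k)\ge 2\delta(\mathcal{G}_k)\ge 2\bigl(\pi(\mathcal{F})/(r-1)!-\varepsilon\bigr)(n-k)^{r-1}$, but then for $i<k$ you have $d_{\mathcal{G}_{i-1}}(v_i)\le\bigl(\pi(\mathcal{F})/(r-1)!-\varepsilon\bigr)(n-i+1)^{r-1}$ with $(n-i+1)^{r-1}>(n-k)^{r-1}$, so the non-positivity fails. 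Finally, the paper does not stop at the first $k$ with $\mathcal{G}_k\in\mathcal{H}_{n-k}$ and compare gaps: it re-derives $q(\mathcal{G}_{i-1})>q(\mathcal{H}_{i-1})$ at every step (using a \emph{fresh} eigenvector for each $\mathcal{G}_{i-1}$, not the restricted original), runs the deletion all the way down to a fixed $N$, and then derives a contradiction from the multiplicative growth factor $\bigl(1-(r-1-\varepsilon^2/2)/(i-1)\bigr)^{-1}$ exceeding the trivial bound $q(\mathcal{G}_N)<2N^{r-1}$ once $n\ge n_0$. That termination mechanism is also missing from your sketch.
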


The Galois field of order 2, denoted by $\mathrm{GF}(2)$, is the finite field consisting of the elements $\{0, 1\}$ equipped with addition and multiplication modulo 2. The Fano plane, denoted by $\mathrm{PG}_2(2)$ (see Figure~\ref{fig:fano}), is the projective plane over $\mathrm{GF}(2)$. In hypergraph  theory, the Fano plane, whose study was suggested by Vera T. S\'os, is identified with the hypergraph having vertex set $[7]$ and edge set $\{123, 345, 561, 174, 275, 376, 246\}$.
\begin{figure}[H]
    \centering
    \includegraphics[width=0.4\linewidth]{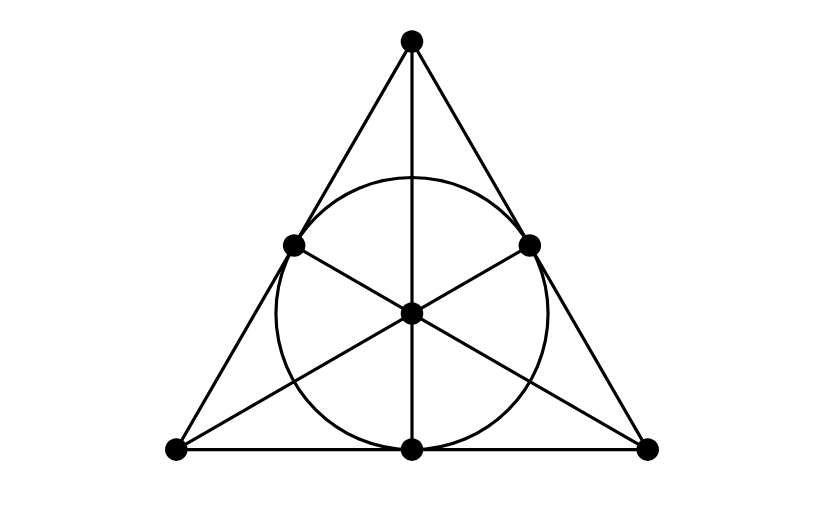}
    \caption{Fano plane}
    \label{fig:fano}
\end{figure}

Denote by $\mathcal{B}_n$ the balanced complete $2$-colorable $3$-uniform hypergraph on $n$ vertices. By applying Theorem \ref{criterion}, we determine the signless Laplacian spectral extremal hypergraph for the Fano plane.

\begin{theorem}\label{Fano}
Let $\mathcal{G}$ be a $\mathrm{PG}_2(2)$-free $3$-uniform hypergraph on $n$ vertices. Then for sufficiently large $n$, we have $q(\mathcal{G}) \leq q(\mathcal{B}_n)$, with equality if and only if $\mathcal{G}=\mathcal{B}_n$.
\end{theorem}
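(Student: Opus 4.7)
The plan is to apply Theorem~\ref{criterion} to the family $\mathcal{F}=\{\mathrm{PG}_2(2)\}$ with $r=3$. Two classical results do the heavy lifting: the Tur\'an density is $\pi(\mathrm{PG}_2(2))=3/4$ (De Caen--F\"uredi; Keevash--Sudakov) and, for $n$ sufficiently large, $\mathrm{ex}_3(n,\mathrm{PG}_2(2))=|E(\mathcal{B}_n)|$ with $\mathcal{B}_n$ as the unique extremum (Keevash--Sudakov; F\"uredi--Simonovits). Since $3/4>1/2$, the main hypothesis of Theorem~\ref{criterion} is satisfied, so the remaining tasks are to verify conditions~(\ref{condition 1}) and~(\ref{condition 2}) and then to identify $\mathcal{B}_n$ as the unique $q$-maximizer inside $\mathcal{H}_n$.

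For~(\ref{condition 1}), the formula $|E(\mathcal{B}_n)|=\binom{n}{3}-\binom{\lfloor n/2\rfloor}{3}-\binom{\lceil n/2\rceil}{3}$ gives $|E(\mathcal{B}_n)|-|E(\mathcal{B}_{n-1})|=\tfrac{3}{8}n^2+O(n)$, matching $\tfrac{\pi(\mathcal{F})}{(r-1)!}n^{r-1}=\tfrac{3}{8}n^2$ to within any prescribed $\sigma n^2$ for $n$ large. For~(\ref{condition 2}), since $\chi(\mathrm{PG}_2(2))=3$ the $2$-colorable hypergraph $\mathcal{B}_n$ is Fano-free, and $\delta(\mathcal{B}_n)=\tfrac{3}{8}n^2-O(n)$, so $\mathcal{B}_n\in\mathcal{H}_n$. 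Substituting $\bm{x}=n^{-1/3}\bm{1}$ into the Rayleigh characterization of Theorem~\ref{relaigh} yields the lower bound
\[
q(\mathcal{B}_n) \;\geq\; \bm{x}^{\mathrm{T}}\mathcal{Q}(\mathcal{B}_n)\bm{x} \;=\; \frac{2r}{n}|E(\mathcal{B}_n)| \;=\; \frac{6}{n}|E(\mathcal{B}_n)|.
\]
The matching upper bound $q(\mathcal{H})\leq\tfrac{6}{n}|E(\mathcal{B}_n)|+\sigma n$ valid for every $\mathcal{H}\in\mathcal{H}_n$ would follow from the degree-stability of the Fano plane (either via the classical Keevash--Sudakov stability theorem or via the general criterion of Hou, Liu, and Zhao~\cite{hou2024criterion}): any such $\mathcal{H}$ lies within edge-symmetric-difference $o(n^3)$ of $\mathcal{B}_n$, and plugging a near-uniform test vector into the Rayleigh quotient of $\mathcal{Q}(\mathcal{H})$ absorbs this perturbation with error $o(n)$.

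Once Theorem~\ref{criterion} yields $q(\mathcal{G})\leq q(\mathcal{H}_n)$ with equality only for $\mathcal{G}\in\mathcal{H}_n$, the remaining task is to pin down $\mathcal{B}_n$ as the unique maximizer inside $\mathcal{H}_n$. Fix $\mathcal{H}^*\in\mathcal{H}_n$ attaining $q(\mathcal{H}^*)=q(\mathcal{H}_n)$ and let $\bm{x}\in\mathbb{R}_{++}^n$ be its Perron eigenvector (Theorem~\ref{thm:Perron-Frobenius}) normalized by $\|\bm{x}\|_3=1$. Stability supplies a bipartition $V(\mathcal{H}^*)=A\cup B$ with $|A|,|B|=\tfrac{n}{2}+o(n)$ and only $o(n^3)$ ``bad'' triples (monochromatic triples inside $A$ or $B$, or mixed triples absent from $\mathcal{H}^*$). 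Combining the eigen-equations
\[
(q(\mathcal{H}^*)-d(v))\,x_v^{2} \;=\; \sum_{\{v,u_1,u_2\}\in E(\mathcal{H}^*)} x_{u_1}x_{u_2}
\]
with the near-regularity $d(v)=\tfrac{3}{8}n^2+o(n^2)$ inherited from $\mathcal{H}_n$ forces $\bm{x}$ to be nearly bi-constant across $A$ and $B$. A local edge-swap argument then shows that replacing any monochromatic triple in $\mathcal{H}^*$ by a missing mixed triple---or simply inserting any missing mixed triple---strictly increases $\bm{x}^{\mathrm{T}}\mathcal{Q}(\mathcal{H}^*)\bm{x}$ and hence $q$; by the $q$-maximality of $\mathcal{H}^*$, no such triple exists, so $\mathcal{H}^*=\mathcal{B}_n$.

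The main obstacle is this uniqueness step. Stability alone yields only approximate structure, and turning the swap argument into a strict inequality requires quantitatively sharpening the stability estimate until the Perron eigenvector is entrywise within $o(n^{-1/3})$ of the uniform vector; only then does the second-order perturbation of the Rayleigh quotient under a single edge swap become controllable. A subsidiary technicality is the upper bound in condition~(\ref{condition 2}): since $q$ is in principle sensitive to local edge changes, one must lean on the near-uniformity of the extremal Perron eigenvector to convert ``$o(n^3)$ edges differ'' into ``$q$ differs by at most $\sigma n$''.
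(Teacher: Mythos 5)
Your overall plan---apply Theorem~\ref{criterion} to $\mathcal{F}=\{\mathrm{PG}_2(2)\}$ after verifying conditions~(\ref{condition 1}) and~(\ref{condition 2})---matches the paper, and the verification of~(\ref{condition 1}) and the lower bound in~(\ref{condition 2}) via the uniform test vector are correct. But there are two genuine gaps, and you have identified them yourself as ``obstacles'' without closing them.

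First, your argument for the upper bound in~(\ref{condition 2}) does not work as stated. Plugging a near-uniform test vector into the Rayleigh quotient of $\mathcal{Q}(\mathcal{H})$ produces a \emph{lower} bound on $q(\mathcal{H})$, never an upper bound; to bound $q(\mathcal{H})$ from above one must control the value at the actual Perron eigenvector, which ``edge symmetric-difference $o(n^3)$'' does not do (the extra edges could in principle concentrate and inflate $q$). What saves the paper is that the Fano degree-stability result (Theorem~\ref{Fano-deg-stability}) is \emph{exact}, not approximate: every $\mathcal{H}\in\mathcal{H}_n$ is genuinely $2$-colorable, i.e.\ $\mathcal{H}_n\subseteq\mathfrak{B}_n$, with no exceptional ``bad'' triples whatsoever. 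The paper then carries out an honest optimization over $\mathfrak{B}_n$ (Lemma~\ref{Bn upper bound}): after showing the extremal $\mathcal{B}'_n$ is connected and complete bipartite with eigenvector constant on each part, it bounds $q(\mathcal{B}'_n)\le\tfrac{3}{4}n^2-\tfrac{3}{2}n-(a-n/2)^2$ via Young's inequality and Bernoulli's inequality. That explicit bound---not any perturbation of a test vector---is what certifies~(\ref{condition 2}).

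Second, your uniqueness step is built on the wrong version of stability, and the swap argument is not executed. Since $\mathcal{H}_n\subseteq\mathfrak{B}_n$ exactly, a $q$-maximizer $\mathcal{H}^*\in\mathcal{H}_n$ has no monochromatic triples to remove; one only needs to (i) show the maximizer over $\mathfrak{B}_n$ is complete bipartite (easy: inserting any missing crossing edge strictly increases the Rayleigh quotient, Claim~\ref{Fano1}), and then (ii) optimize over the part sizes. Step~(ii) is where the paper's bound $q(\mathcal{B}'_n)\le\tfrac{3}{4}n^2-\tfrac{3}{2}n-(a-n/2)^2$, compared against the lower bound of Lemma~\ref{Bn lower bound}, forces the partition to be balanced (Lemma~\ref{new}). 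Your ``nearly bi-constant Perron eigenvector plus local swap'' sketch would have to reprove all of this quantitatively; as written it assumes approximate structure where exact structure is available and is therefore both harder than necessary and incomplete.

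In short: replace the $o(n^3)$-edge-distance form of stability by the exact $2$-colorability conclusion of Theorem~\ref{Fano-deg-stability}, and replace the test-vector perturbation heuristic by an explicit maximization of $q$ over complete $2$-colorable $3$-graphs.
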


\section{Preliminaries}
Given a vector $\mathbf{x}=(x_1, \ldots, x_n) \in \mathbb{R}^n$, for a subset \scalebox{0.95}{$U \subseteq V(\mathcal{H})$}, let \scalebox{0.95}{$x^U=\prod_{v \in U} x_v$}. By
Theorem \ref{relaigh}, the signless Laplacian spectral radius $q(\mathcal{H})$ can be characterized as\\[-6mm]
\begin{align}\label{eq3}
    q(\mathcal{H})=\max _{\|\mathbf{x}\|_r=1}\Biggl(\sum_{e\in E(\mathcal{H})}\bigg(\sum_{v\in e} x_v^r+r  x^e\bigg)\Biggr).
\end{align}
It is well known that there always exists a non-negative eigenvector corresponding to $q(\mathcal{H})$. Furthermore, if $\mathcal{H}$ is connected, the unique positive unit eigenvector $\mathbf{x}$ corresponding to $q(\mathcal{H})$ is called the \emph{principal eigenvector} of $\mathcal{H}$. By the eigenequation, the following holds for each $v \in V(\mathcal{H})$:
\begin{align*}
    q(\mathcal{H}) x_v^{r-1} = d(v) x_v^{r-1} + \sum_{e \in E_v} x^{e \setminus \{v\}}.\\[-7.5mm]
\end{align*}


While the Tur\'an problem for graphs has been extensively studied, the problem becomes significantly more challenging for hypergraphs. In fact, very few exact results in hypergraphs are known. Even for the complete $r$-graph $K_s^{(r)}$ on $s$ vertices, the natural generalization of the complete graph $K_s$, the exact Tur\'an numbers remain unknown for most values. Since Tur\'an-type problems in hypergraphs are notoriously difficult, most progress in this area relies on the study of the stability phenomenon.

Many families $\mathcal{F}$ of $r$-graphs have the property that there exists a unique $\mathcal{F}$-free $r$-graph $\mathcal{G}$ on $n$ vertices with $e(\mathcal{G})=\operatorname{ex}_r(n, \mathcal{F})$, and every $\mathcal{F}$-free hypergraph of size close to $\operatorname{ex}_r(n, \mathcal{F})$ can be transformed to $\mathcal{G}$ by deleting and adding very few edges. Such a property is called stability of $\mathcal{F}$.
The first stability theorem was proved  by Erd\H{o}s and Simonovits~\cite{simonovits1968method}.

\begin{theorem}[\cite{simonovits1968method}]\label{stab.1}
  Let $\ell \geq 2$ and let $\mathcal{F}$ be a family of graphs with $\chi(\mathcal{F}) = \ell+1$.
  Then for every $\delta > 0$, there exist $\varepsilon > 0$ and $N_0 \in \mathbb{N}$ such that every $\mathcal{F}$-free graph on $n \geq N_0$ vertices with at least $(1-\varepsilon) \operatorname{ex}(n, \mathcal{F})$ edges can be transformed into the Tur\'an graph $T_{n,\ell}$ by deleting and adding at most $\delta n^2$ edges.
\end{theorem}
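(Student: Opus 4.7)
The plan is to use Szemer\'edi's regularity lemma together with the Erd\H{o}s--Stone--Simonovits theorem, which asserts that $\operatorname{ex}(n,\mathcal{F})=(1-1/\ell)\binom{n}{2}+o(n^2)$ since $\chi(\mathcal{F})=\ell+1$. Thus if $G$ is $\mathcal{F}$-free on $n$ vertices with $e(G)\ge (1-\varepsilon)\operatorname{ex}(n,\mathcal{F})$, then $e(G)\ge (1-1/\ell)\binom{n}{2}-\varepsilon' n^2$, where $\varepsilon'$ can be made arbitrarily small by shrinking $\varepsilon$ and taking $n$ large. The goal is then to show that $G$ is, up to at most $\delta n^2$ edge modifications, the Tur\'an graph $T_{n,\ell}$.

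First, I would apply Szemer\'edi's regularity lemma with regularity parameter $\eta$ and density threshold $d$ (chosen later to depend on $\delta$) to produce an equitable partition $V(G)=V_0\cup V_1\cup\cdots\cup V_k$, where $|V_0|\le \eta n$ and most pairs $(V_i,V_j)$ are $\eta$-regular. Build the reduced graph $R$ on $[k]$ by putting $ij\in E(R)$ whenever $(V_i,V_j)$ is $\eta$-regular of density at least $d$. A standard edge-counting argument (discarding edges inside clusters, across irregular pairs, across low-density pairs, and touching $V_0$) shows $e(R)\ge (1-1/\ell)\binom{k}{2}-\varepsilon'' k^2$ for some small $\varepsilon''$. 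Next, invoke the embedding/counting lemma: if $R$ contained $K_{\ell+1}$, then for any fixed $F\in\mathcal{F}$ of chromatic number $\ell+1$, one could embed $F$ into $G$, contradicting $\mathcal{F}$-freeness. Hence $R$ is $K_{\ell+1}$-free, and by Tur\'an's theorem combined with its own stability for cliques (a much easier case), $R$ is within $\varepsilon''' k^2$ edges of $T_{k,\ell}$. In particular, $R$ admits a partition $[k]=I_1\cup\cdots\cup I_\ell$ with almost no edges of $R$ inside any $I_t$ and almost all pairs between different $I_t$'s present.

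Finally, I would lift this partition back to $V(G)$ by setting $U_t=\bigcup_{i\in I_t}V_i$ for $t=1,\ldots,\ell$, discarding $V_0$ (absorb into any class). Edges of $G$ inside a single $U_t$ must come from pairs that are inside a cluster, between clusters in the same $I_t$ (which are either irregular, low density, or both low density by construction), or involve $V_0$; a bookkeeping estimate bounds all of these by $o(n^2)$, which can be made at most $(\delta/2) n^2$. Edges missing between different $U_t$'s are controlled similarly. Finally, since $G$ is dense, one shows the $|U_t|$ are each within $o(n)$ of $n/\ell$ (else one could decrease $e(G)$ substantially), so rebalancing the parts costs at most $(\delta/2) n^2$ further edge edits, and the resulting complete balanced $\ell$-partite graph is exactly $T_{n,\ell}$.

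The main obstacle is the quantitative bookkeeping: one must choose the regularity parameters $\eta$ and $d$, and the applied slack $\varepsilon$, in the right order so that all the separate small losses (from $V_0$, from irregular pairs, from low-density pairs, from the stability step for $R$, and from rebalancing the parts) compose to at most $\delta n^2$. The delicate point is the rebalancing, which requires showing that any significantly unbalanced $\ell$-partition of a dense $K_{\ell+1}$-free graph has strictly fewer than $(1-1/\ell)\binom{n}{2}-\Omega(n^2)$ crossing edges, contradicting the edge count for $G$.
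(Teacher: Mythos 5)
The paper does not prove this statement; it simply cites it as the classical Erd\H{o}s--Simonovits stability theorem from \cite{simonovits1968method}, so there is no ``paper's own proof'' to compare against. Your proposal is a correct route to the theorem, but it is not the one Simonovits used in the cited 1968 paper. Simonovits's original argument predates Szemer\'edi's regularity lemma and proceeds by elementary means: a direct degree/vertex-removal and symmetrization argument (progressive induction, Zykov-style symmetrization of low-degree vertices, and a careful count of edges in a near-extremal graph) that shows any near-extremal $\mathcal{F}$-free graph must already closely resemble $T_{n,\ell}$. Your regularity-lemma approach is the standard modern textbook proof (reduce to the clique case in the reduced graph, invoke the much easier clique stability, lift the partition back, rebalance); it is conceptually cleaner and generalizes smoothly, but it gives far worse quantitative dependence of $\varepsilon$ on $\delta$ (tower-type) than the elementary route.

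Two small cautions about your sketch, neither fatal. First, you invoke ``Tur\'an's theorem combined with its own stability for cliques''; to avoid circularity you should note explicitly that clique stability is proved independently and elementarily (e.g.\ by a minimum-degree/neighborhood argument), not as a special case of the theorem you are proving. Second, your rebalancing step should be phrased as an upper bound on $e(G)$: if the lifted parts $U_1,\dots,U_\ell$ were unbalanced by $\Omega(n)$, then even the complete $\ell$-partite graph on those parts would have at most $(1-1/\ell)\binom{n}{2}-\Omega(n^2)$ edges, and since $G$ differs from that graph by only $o(n^2)$ edges this contradicts $e(G)\ge(1-1/\ell)\binom{n}{2}-o(n^2)$. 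With those clarifications your argument is sound.
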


Using Theorem~\ref{stab.1}, Simonovits determined the exact value of $\operatorname{ex}(n, F)$ for all edge-critical graphs $F.$ Over the years, the notion of stability has evolved into diverse variants and yielded numerous results (e.g., see \cite{chen2024strong, erdos1988how, hou2024criterion, korandi2021exact, liu2023stability,liu2024hypergraph,liu2023unified}). A well-known variation involves replacing the number of edges with minimum degree conditions, a concept we simply refer to as \emph{degree-stability}.

Let $\mathcal{F}$ be a family of graphs and $\mathcal{G}$ be a class of $\mathcal{F}$-free graphs. If there exist $\varepsilon>0$ and $N>0$ such that every $\mathcal{F}$-free graph $G$ on $n\geq N$ vertices with $\delta(G)\geq (\pi(\mathcal{F})-\varepsilon)n$ is a subgraph of some member of $\mathcal{G}$, then we say that $\mathcal{F}$ is \textit{degree-stable} with respect to $\mathcal{G}$.

Such stability arguments have also been widely employed to investigate Tur\'an-type problems.
Regarding the Tur\'an problem, a notable early application of the stability method to hypergraphs concerns the Fano plane, established by Keevash and Sudakov~\cite{keevash2005turan}, and independently by F\"uredi and Simonovits~\cite{furedi2005triple}.  Below, we summarize the known results on the Tur\'an number of the Fano plane and its degree-stability.

De Caen and F\"uredi~\cite{decaen2000maximum}  determined that $\pi(\mathrm{PG}_2(2))=3/4$. The exact Tur\'an number of the Fano plane is then given by the following theorem.
\begin{theorem}[\cite{furedi2005triple,keevash2005turan}]\label{fano turan number}
  There exists an $n_1$ such that the following holds. If $\mathcal{H}$ is a triple system on $n > n_1$ vertices not containing the Fano configuration $\mathrm{PG}_2(2)$ and of maximum cardinality, then it is $2$-colorable. Thus,
  \[
    \operatorname{ex}_3(n, \mathrm{PG_2(2)}) = \binom{n}{3} - \binom{\lfloor n/2 \rfloor}{3} - \binom{\lceil n/2 \rceil}{3}.
  \]
\end{theorem}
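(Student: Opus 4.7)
The plan follows the stability-based approach pioneered by F\"uredi--Simonovits and Keevash--Sudakov. The two inputs are de Caen and F\"uredi's density result $\pi(\mathrm{PG}_2(2))=3/4$, which gives $\mathrm{ex}_3(n,\mathrm{PG}_2(2))\le(\tfrac{3}{4}+o(1))\binom{n}{3}$, together with the elementary lower bound $e(\mathcal{B}_n)=\binom{n}{3}-\binom{\lfloor n/2\rfloor}{3}-\binom{\lceil n/2\rceil}{3}$. The task is to show that for large $n$ no Fano-free $3$-graph exceeds $e(\mathcal{B}_n)$ and that every maximum-size Fano-free $3$-graph is $2$-colorable. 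The first step I would carry out is a stability lemma: for every $\delta>0$ there exist $\varepsilon>0$ and $N$ such that any Fano-free $3$-graph on $n\ge N$ vertices with at least $(\tfrac{3}{4}-\varepsilon)\binom{n}{3}$ edges can be made $2$-colorable by deleting at most $\delta n^3$ edges. This follows by applying the density bound to the neighborhoods/links and then a standard averaging/cleaning argument, exactly in the spirit of the Simonovits-type stability Theorem~\ref{stab.1} transposed to the hypergraph setting.

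The next step is a minimum degree reduction. Let $\mathcal{H}$ be an extremal Fano-free $3$-graph on $n$ vertices, and induct on $n$: if some vertex $v$ satisfies $d(v)<\delta(\mathcal{B}_n)$, then deleting $v$ yields
\[
  e(\mathcal{H})\le d(v)+\mathrm{ex}_3(n-1,\mathrm{PG}_2(2))<\delta(\mathcal{B}_n)+e(\mathcal{B}_{n-1})=e(\mathcal{B}_n),
\]
a contradiction. So every vertex has degree at least $\delta(\mathcal{B}_n)=(1+o(1))\tfrac{3}{8}n^2$, and in particular $e(\mathcal{H})=(1-o(1))\tfrac{3}{4}\binom{n}{3}$, so stability applies. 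Pick a partition $V(\mathcal{H})=A\cup B$ minimizing the number of monochromatic (``bad'') edges; the high minimum degree forces $|A|,|B|=n/2+o(n)$, and only $o(n^3)$ edges are bad.

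The crux is to show that no bad edge can actually occur. Suppose $e=\{a_1,a_2,a_3\}\subseteq A$ is bad. The high minimum degree guarantees that for each $a_i$, the link $L(a_i)$ restricted to $B\times B$ contains all but $o(n^2)$ of the pairs in $B$; by a standard intersection-of-dense-graphs argument the common link $\bigcap_i L(a_i)|_{B\times B}$ still contains a positive density of pairs. Using this, I would greedily select $b_1,b_2,b_3,b_4\in B$ such that the remaining six triples required by the Fano incidence structure through $e$, namely $\{a_1,b_1,b_2\},\{a_2,b_3,b_4\},\{a_3,b_1,b_3\},\{a_3,b_2,b_4\},\{b_1,b_4,?\},\{b_2,b_3,?\}$ (with the last two closing via one additional vertex $b_5\in A$ supplied by the transitivity of the Fano automorphism group on labeled lines), are all edges of $\mathcal{H}$. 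This contradicts Fano-freeness, so $\mathcal{H}$ has no bad edges, i.e., $\mathcal{H}$ is $2$-colorable. Finally, the bound $e(\mathcal{H})\le\binom{n}{3}-\binom{|A|}{3}-\binom{|B|}{3}$, optimized at $|A|=\lfloor n/2\rfloor$, gives the formula and forces $\mathcal{H}=\mathcal{B}_n$ at equality.

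The main obstacle is the Fano-embedding step. The Fano plane is highly rigid: each vertex's link is a perfect matching on six vertices, so the six triples completing $e$ must respect a very specific incidence pattern, not just be arbitrary triples. Handling vertices that lie in many bad edges requires a preliminary symmetrization move, replacing their links by ``cleaner'' versions before embedding, to ensure that the intersection of the three link graphs of $a_1,a_2,a_3$ is dense enough in $B$ that a labeled Fano configuration can be found. Making the greedy embedding work with the correct quantitative slack between $\varepsilon$ (stability parameter), the minimum degree, and the $o(n^3)$ count of bad edges is the technically delicate part of the argument.
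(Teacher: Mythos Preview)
The paper does not prove this theorem; it is quoted verbatim from F\"uredi--Simonovits and Keevash--Sudakov and used only as a black box (to verify condition~\eqref{condition 1} in Theorem~\ref{criterion}). Your proposal correctly reconstructs the stability-plus-cleaning strategy of those original sources, so in that sense it matches ``the paper's proof'' by matching the papers the present authors cite.

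Two inaccuracies in your sketch are worth flagging. First, the Fano plane has exactly seven points, so once the bad edge $\{a_1,a_2,a_3\}\subseteq A$ is fixed, only four further points $b_1,\dots,b_4\in B$ are needed, and \emph{all six} remaining lines have the shape $\{a_i,b_j,b_k\}$ (each $a_i$ in two of them, each $b_j$ in three); there is no $b_5$ and no return to $A$. Your listed triples $\{a_1,b_1,b_2\}$ and $\{a_2,b_3,b_4\}$ are in fact disjoint, which cannot happen for two lines of a projective plane, so the labeling needs repair. Second, the induction you set up for the minimum-degree reduction lacks a base case (you invoke $\mathrm{ex}_3(n-1,\mathrm{PG}_2(2))=e(\mathcal{B}_{n-1})$ before it is established). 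The route actually taken in the source papers is to iteratively delete vertices of degree below $(\tfrac{3}{8}-\varepsilon)n^2$, show this removes only $o(n)$ vertices when $\mathcal{H}$ is near-extremal, apply the degree-stability Theorem~\ref{Fano-deg-stability} to what remains, and then argue that every deleted vertex can be reinserted on one side of the bipartition without creating a Fano plane.
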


Given a $3$-graph $\mathcal{H}$ and a partition $V(\mathcal{H}) = X \cup \bar{X}$, let $\mathcal{H}(X, \bar{X})$ denote the subgraph formed by all edges meeting both $X$ and $\bar{X}$. The following theorem establishes the degree-stability of the Fano plane.
\begin{theorem}[\cite{furedi2005triple, keevash2005turan}]\label{Fano-deg-stability}
  There exist a $\gamma_2 > 0$ and an $n_2$ such that the following holds. If $\mathcal{H}$ is a triple system on $n > n_2$ vertices not containing the Fano configuration $\mathrm{PG_2(2)}$ and
  \[
    d(v) > \left( \frac{3}{4} - \gamma_2 \right) \binom{n}{2}
  \]
  holds for every $v \in V(\mathcal{H})$, then $\mathcal{H}$ is bipartite, that is, $\mathcal{H} \subseteq \mathcal{H}\left(X, \bar{X}\right)$ for some $X \subseteq V(\mathcal{H})$.
\end{theorem}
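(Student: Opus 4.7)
The plan is to follow the Keevash--Sudakov / Füredi--Simonovits strategy: first convert the minimum-degree hypothesis into a near-extremal edge count, then apply edge-stability of the Fano plane to obtain an approximate bipartition $V = A \cup B$, next refine this by a switching argument to control per-vertex contributions, and finally rule out any non-crossing edge by constructing a Fano plane out of it together with high-density links.

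Step 1 is to sum degrees: $e(\mathcal{H}) = \tfrac{1}{3}\sum_v d(v) > \tfrac{n}{3}(3/4 - \gamma_2)\binom{n}{2}$, which by Theorem \ref{fano turan number} lies within $O(\gamma_2 n^3)$ of $\operatorname{ex}_3(n, \mathrm{PG}_2(2))$. Step 2 invokes the Fano edge-stability result (classical, from Keevash--Sudakov): for every $\eta > 0$ one may take $\gamma_2 \leq \gamma_2(\eta)$ so that for $n$ large, $\mathcal{H}$ admits a bipartition with $||A| - n/2|, ||B| - n/2| = O(\eta n)$ and $e(\mathcal{H}[A]) + e(\mathcal{H}[B]) \leq \eta n^3$. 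Step 3 selects such a bipartition minimizing the number of non-crossing edges; for every $v \in A$ this forces $\beta_v \geq \alpha_v$, where $\alpha_v, \gamma_v, \beta_v$ count edges through $v$ with $(2, 1, 0)$ other vertices in $A$.

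In Step 4, the degree hypothesis together with the elementary upper bounds $\gamma_v \leq (|A|-1)|B|$ and $\beta_v \leq \binom{|B|}{2}$ forces
\[
  \beta_v \;\geq\; \binom{|B|}{2} \;-\; O\bigl((\gamma_2 + \eta)\,n^2\bigr) \;-\; \alpha_v
\]
for every $v \in A$; that is, the $B$-restricted link $L(v) \cap \binom{B}{2}$ covers all but $O((\gamma_2 + \eta)n^2 + \alpha_v)$ pairs of $B$. Step 5 (contradiction) assumes a non-crossing edge exists, say $\{a_1, a_2, a_3\} \subseteq A$. By an averaging / pigeonhole argument applied to $\{\alpha_v\}_{v \in A}$ (using $\sum_v \alpha_v = 3 e(\mathcal{H}[A]) \leq 3\eta n^3$), such a bad edge may be chosen so that $\alpha_{a_i} = o(n^2)$ for $i=1,2,3$, whence each $L(a_i) \cap \binom{B}{2}$ misses only $o(n^2)$ pairs. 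We then seek four distinct vertices $x_4,\dots,x_7 \in B$ with
\[
  \{x_4,x_5\}, \{x_6,x_7\} \in L(a_1), \quad \{x_4,x_6\}, \{x_5,x_7\} \in L(a_2), \quad \{x_4,x_7\}, \{x_5,x_6\} \in L(a_3).
\]
A union bound over these six conditions---each excluding at most $o(n^2) \cdot |B|^2 = o(n^4)$ ordered $4$-tuples---leaves $\Theta(n^4)$ valid tuples, so at least one exists. Together with $\{a_1, a_2, a_3\}$ this realizes all seven lines of a Fano plane inside $\mathcal{H}$, contradicting the hypothesis. Hence no non-crossing edge exists and $\mathcal{H}$ is bipartite.

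The main obstacle lies in Steps 4--5, namely guaranteeing that the three relevant links $L(a_i) \cap \binom{B}{2}$ are dense enough in $\binom{B}{2}$ to make the union bound succeed. A crude union bound over arbitrary $4$-tuples in $V$ based solely on the hypothesis $d(v) \geq (3/4 - \gamma_2)\binom{n}{2}$ would fail, because six constraints each excluding $\sim \tfrac14 n^4$ tuples together exceed the total $\sim n^4$. The delicate coupling between the switching inequality $\beta_v \geq \alpha_v$, the bipartite structure supplied by edge stability, and the averaging choice of the bad edge is precisely what brings each ``missing-pair'' count down from $\Theta(n^2)$ to $o(n^2)$ and makes the embedding go through; tuning the hierarchy of small constants $\gamma_2 \ll \eta$ is the most bookkeeping-intensive part of the argument.
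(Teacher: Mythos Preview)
This theorem is not proved in the paper at all; it is quoted with citations to F\"uredi--Simonovits and Keevash--Sudakov and then used as a black box in the proof of Theorem~\ref{Fano}. So there is no ``paper's own proof'' to compare your attempt against.

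That said, your sketch follows the broad Keevash--Sudakov strategy, but Step~5 contains a genuine gap. You claim that ``such a bad edge may be chosen so that $\alpha_{a_i} = o(n^2)$ for $i=1,2,3$'' by averaging from $\sum_{v \in A} \alpha_v = 3\,e(\mathcal{H}[A]) \le 3\eta n^3$. Averaging only tells you that the set $Z = \{v \in A : \alpha_v > \eta^{1/2} n^2\}$ is small, $|Z| = O(\eta^{1/2} n)$; it does \emph{not} guarantee that some bad edge has all three vertices outside $Z$. For instance, all of $\mathcal{H}[A]$ could consist of edges through a single vertex $v^*$ with $\alpha_{v^*} = \Theta(n^2)$, and then every bad edge meets $Z$. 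Since the task is to rule out \emph{every} non-crossing edge, disposing only of those that avoid $Z$ is not enough. Moreover, for such a $v^*$ the switching inequality $\beta_{v^*} \ge \alpha_{v^*}$ combined with the degree bound only yields $\beta_{v^*} \gtrsim n^2/16$, so $L(v^*) \cap \binom{B}{2}$ can miss a constant fraction of $\binom{B}{2}$; the two $L(v^*)$-constraints then already exclude on the order of $|B|^4$ tuples, and your union bound in $B^4$ fails.

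In the original proofs this is precisely where the substantive work lies: an additional structural argument---using the minimum-degree hypothesis together with the approximate bipartition, not just the global bad-edge count---is needed to force \emph{every} $v \in A$ to satisfy $\alpha_v = o(n^2)$ (equivalently, every link is nearly complete on $\binom{B}{2} \cup (A \times B)$). Only once that is established does the Fano embedding go through for an arbitrary non-crossing edge. Your final paragraph rightly identifies Steps~4--5 as the crux, but the specific averaging mechanism you invoke there is exactly the step that does not deliver.
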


We will use the following simple facts.
\renewcommand{\thefact}{\arabic{fact}}

\setcounter{fact}{0} \begin{fact}(H\"older's inequality).\label{Holder}
 Suppose that $(x_1, \ldots, x_n)$ and $(y_1, \ldots, y_n)$ are nonnegative vectors and $p, q$ are positive integers satisfying $\frac{1}{p}+\frac{1}{q}=1$. Then
 $$\sum_{i\in [n]}x_iy_i\leq (x_1^p+\cdots+x_n^p)^{\frac{1}{p}}(y_1^q+\cdots+y_n^q)^{\frac{1}{q}}.$$
 In particular, taking $(x_1, \ldots, x_n)=(1, \ldots, 1)$ and $(p,q)=(\frac{r}{r-1}, r)$, we obtain
 $$\sum_{i\in [n]}y_i\leq n^{\frac{r-1}{r}}(y_1^r+\cdots+y_n^r)^{\frac{1}{r}}.$$

\end{fact}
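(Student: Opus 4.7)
The plan is to establish H\"older's inequality by the standard two-step argument: reduce to unit vectors by scaling, then sum the pointwise Young's inequality. Set $A = (\sum_{i} x_i^p)^{1/p}$ and $B = (\sum_i y_i^q)^{1/q}$. If $A = 0$ or $B = 0$, the corresponding vector is identically zero, so the left-hand side vanishes and there is nothing to prove. Otherwise, define $a_i = x_i / A$ and $b_i = y_i / B$, so that $\sum_i a_i^p = \sum_i b_i^q = 1$. After this normalization, the claim reduces to showing $\sum_i a_i b_i \leq 1$.

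The core ingredient is Young's inequality: for nonnegative reals $a, b$ and conjugate exponents $p, q$ with $\tfrac{1}{p}+\tfrac{1}{q}=1$, one has $ab \leq \tfrac{a^p}{p} + \tfrac{b^q}{q}$. I would derive this from the concavity of the logarithm: when $a, b > 0$, write
\[
\log(ab) = \tfrac{1}{p}\log(a^p) + \tfrac{1}{q}\log(b^q) \leq \log\!\Bigl(\tfrac{a^p}{p} + \tfrac{b^q}{q}\Bigr),
\]
and exponentiate; the cases $a = 0$ or $b = 0$ are trivial. Applying this pointwise to $(a_i, b_i)$ and summing yields
\[
\sum_{i} a_i b_i \leq \tfrac{1}{p}\sum_i a_i^p + \tfrac{1}{q}\sum_i b_i^q = \tfrac{1}{p} + \tfrac{1}{q} = 1,
\]
which, after multiplying through by $AB$, is exactly the desired bound.

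For the specialization, setting $x_i \equiv 1$ with $p = r/(r-1)$ and $q = r$ gives $\sum_i x_i^p = n$, hence $(\sum_i x_i^p)^{1/p} = n^{(r-1)/r}$, and the stated corollary follows immediately from the general inequality. There is no serious obstacle here: the result is a classical inequality, invoked purely as a computational tool in later arguments. The only minor subtlety worth flagging is that the hypothesis as written requires $p, q$ to be positive integers, which would force $p = q = 2$; however, the proof above works verbatim for any real exponents $p, q > 1$ with $\tfrac{1}{p} + \tfrac{1}{q} = 1$, which is what the corollary's use of $p = r/(r-1)$ actually requires.
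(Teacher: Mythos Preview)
Your proof is correct and is the standard textbook argument. The paper itself does not prove this fact at all; it is simply stated as a classical inequality and used later as a tool, so there is no ``paper's own proof'' to compare against. Your observation that the hypothesis ``$p,q$ are positive integers'' is a slip (since the corollary immediately applies the inequality with $p=r/(r-1)$) is also accurate, and your argument covers the general real-exponent case that is actually needed.
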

\setcounter{fact}{1} \begin{fact}\label{G-B-T}
  Let $\alpha \ge 1$ be an integer. If $x > 0$ is sufficiently small, then
  \[
    (1+x)^\alpha < 1 + \bigl(\alpha + \tfrac12\bigr)x.
  \]
\end{fact}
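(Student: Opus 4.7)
The plan is to treat this as a single application of continuity of the derivative of $(1+x)^\alpha$ at $x = 0$. Specifically, I would set
\[
g(x) = 1 + \bigl(\alpha + \tfrac{1}{2}\bigr)x - (1+x)^\alpha,
\]
so that the desired inequality is equivalent to $g(x) > 0$. Observe that $g(0) = 0$, and
\[
g'(x) = \bigl(\alpha + \tfrac{1}{2}\bigr) - \alpha(1+x)^{\alpha - 1},
\]
giving $g'(0) = \tfrac{1}{2} > 0$. Since $g'$ is continuous in $x$, there exists some $\delta = \delta(\alpha) > 0$ such that $g'(x) \geq \tfrac{1}{4}$ for all $x \in [0, \delta]$. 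Consequently $g$ is strictly increasing on $[0, \delta]$, and so $g(x) > g(0) = 0$ for every $x \in (0, \delta]$, which is precisely the claimed inequality.

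If one prefers an explicit threshold, one can instead invoke the binomial theorem, using that $\alpha$ is a positive integer, to write
\[
(1+x)^\alpha - 1 - \alpha x = \sum_{k=2}^{\alpha} \binom{\alpha}{k} x^k \leq x^2 \sum_{k=2}^{\alpha} \binom{\alpha}{k} \leq 2^{\alpha} x^2
\]
for $x \in (0, 1]$, so the bound $(1+x)^\alpha < 1 + (\alpha + \tfrac{1}{2})x$ certainly holds whenever $x < 2^{-\alpha - 1}$. Either route is completely routine and no genuine obstacle is expected; the only minor point to keep in mind is that the threshold on $x$ is allowed to depend on $\alpha$, which is consistent with the phrase \emph{sufficiently small} in the statement and with how the fact will be applied later in the paper.
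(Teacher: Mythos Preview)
Your proposal is correct. Your second route, via the binomial expansion and the bound $\sum_{k=2}^{\alpha}\binom{\alpha}{k}x^k \le 2^{\alpha}x^2$, is essentially the paper's own argument (the paper also expands $(1+x)^\alpha$ and bounds the quadratic tail by $x/2$ for small $x$); your first route, via the sign of the derivative of $g(x)=1+(\alpha+\tfrac12)x-(1+x)^\alpha$ at $0$, is a slightly different but equally routine justification.
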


\begin{proof}
  By the binomial expansion,
  \[
    (1+x)^\alpha = \sum_{k=0}^\alpha \binom{\alpha}{k} x^k
    = 1 + \alpha x + \sum_{k=2}^{\alpha} \binom{\alpha}{k} x^k.
  \]
Clearly, there exists a constant $C$ such that for every sufficiently small $x>0$, we have
  \[
    \sum_{k=2}^{\alpha} \binom{\alpha}{k} x^k \le C(\alpha-2)x^2\le \frac{x}{2}.
  \]
  Thus,
  \[
    (1+x)^\alpha
    = 1 + \alpha x + \sum_{k=2}^{\alpha} \binom{\alpha}{k} x^k
    < 1 + \biggl(\alpha + \frac{1}{2}\biggr)x.
  \]
  This completes the proof.\qedhere
\end{proof}

The following  facts,  proved by Zheng, Li, and Li \cite{zheng2026signless}, will be employed in our proof.
\begin{fact}[\cite{zheng2026signless}]\label{fact1}
    If $0<x<\frac{1}{2}$ and $0<a<1$, then $\ln (1-a x)+a x+x^2>0$.
\end{fact}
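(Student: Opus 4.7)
The plan is to show that, for fixed $a\in(0,1)$, the function $f(x)=\ln(1-ax)+ax+x^{2}$ is strictly increasing on $(0,1/2)$ and vanishes at $x=0$. Since $f$ is a smooth function of $x$ on the relevant interval (as $1-ax>1/2>0$), a derivative argument should be clean and avoids any series manipulation.

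First I would compute
\[
  f'(x) \;=\; \frac{-a}{1-ax} + a + 2x,
\]
and put this over the common denominator $1-ax$. After expanding the numerator and canceling, one obtains
\[
  f'(x) \;=\; \frac{x\bigl(2 - a^{2} - 2ax\bigr)}{1-ax}.
\]
The next step is to verify that both factors in this expression are positive for $x\in(0,1/2)$. Since $0<ax<1/2$, the denominator satisfies $1-ax>1/2>0$. For the numerator, $0<a<1$ gives $a^{2}<1$ and $x<1/2$ gives $2ax<a<1$, so $2-a^{2}-2ax>2-1-1=0$. Together with $x>0$, this yields $f'(x)>0$ throughout $(0,1/2)$.

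Finally, $f(0)=\ln 1 + 0 + 0 = 0$, so the strict monotonicity of $f$ on $[0,1/2)$ forces $f(x)>0$ for every $x\in(0,1/2)$, which is exactly the claimed inequality.

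I do not expect any real obstacle here: the entire proof reduces to one derivative computation and a sign check on a quadratic in $a$ and $x$. The only step meriting even a moment of care is confirming that the numerator $2-a^{2}-2ax$ stays positive on the full domain, but this is immediate from $a<1$ and $x<1/2$. An alternative route, via the Taylor expansion $\ln(1-ax)=-\sum_{k\ge 1}(ax)^{k}/k$ and a geometric bound on the tail $\sum_{k\ge 3}(ax)^{k}/k\le (ax)^{3}/\bigl(3(1-ax)\bigr)$, would also work and would even yield the quantitative lower bound $f(x)\ge x^{2}(3-4x)/6$; I would keep this in reserve in case a stronger quantitative form is ever needed elsewhere in the paper.
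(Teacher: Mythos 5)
Your derivative argument is correct and complete. The simplification
\[
  f'(x)=\frac{-a}{1-ax}+a+2x=\frac{-a+(a+2x)(1-ax)}{1-ax}=\frac{x\bigl(2-a^{2}-2ax\bigr)}{1-ax}
\]
checks out, and the sign analysis on each factor over $(0,1/2)$ with $0<a<1$ is sound: $1-ax>1/2$, $a^{2}<1$, $2ax<a<1$, hence the numerator and denominator are both positive, and $f(0)=0$ finishes the argument. Note that the paper itself does not prove this statement — it is imported verbatim as a cited fact from \cite{zheng2026signless} — so there is no in-paper proof to compare against; your self-contained monotonicity proof is a perfectly appropriate verification, and the Taylor-remainder alternative you sketch would also work if a quantitative lower bound were ever needed.
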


\begin{fact}[\cite{zheng2026signless}]\label{fact2}
    If $x>1$, then $\frac{1}{x}<\ln x-\ln (x-1)$ and $\frac{1}{x^2}<\frac{1}{x-1}-\frac{1}{x}$.
\end{fact}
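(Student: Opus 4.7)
The plan is to prove both inequalities by rewriting each right-hand side as a definite integral over $[x-1,x]$ and then bounding the integrand in a uniform way. For the first inequality, I would observe that
\[
  \ln x - \ln(x-1) = \int_{x-1}^{x} \frac{1}{t}\, dt.
\]
Since $x>1$, the function $t \mapsto 1/t$ is strictly decreasing and strictly positive on $[x-1,x]$, so for every $t \in [x-1,x)$ we have $1/t > 1/x$. Integrating this strict pointwise inequality over an interval of length $1$ (with strict inequality holding on a set of positive measure) yields $\int_{x-1}^{x} (1/t)\, dt > 1/x$, which is exactly the desired bound.

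For the second inequality, I would apply the same idea to the reciprocal square. Write
\[
  \frac{1}{x-1} - \frac{1}{x} = \int_{x-1}^{x} \frac{1}{t^2}\, dt,
\]
and use that $1/t^2 > 1/x^2$ for all $t \in [x-1,x)$ (again because $t \mapsto 1/t^2$ is strictly decreasing on $(0,\infty)$). Integrating gives $1/(x-1) - 1/x > 1/x^2$.

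As a sanity check one can verify the second inequality purely algebraically: $1/(x-1) - 1/x = 1/(x(x-1))$, and $1/(x(x-1)) > 1/x^2$ reduces to $x^2 > x(x-1)$, i.e.\ $x > 0$, which is immediate from $x>1$. I do not anticipate any real obstacle here; the only point worth a line of care is ensuring the inequalities are strict, which follows because the pointwise bounds $1/t > 1/x$ and $1/t^2 > 1/x^2$ hold strictly on the half-open interval $[x-1,x)$ of positive length.
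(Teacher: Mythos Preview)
Your proof is correct. The paper does not actually supply its own proof of this fact; it simply quotes it from the cited reference \cite{zheng2026signless}. Your integral-comparison argument (and the algebraic verification for the second inequality) is a clean, self-contained justification, so there is nothing to compare against and nothing to fix.
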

\begin{fact}(Young's inequality). \label{fact6}
For non-negative real number $a, b\geq 0$, real number $p, q>1$ satisfying $\frac{1}{p}+\frac{1}{q}=1$, then
$$ab\leq \frac{a^p}{p}+\frac{b^q}{q}$$
with equality if and only if $a^p=b^q$.

\end{fact}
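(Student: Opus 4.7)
The plan is to prove Young's inequality by a direct calculus argument, since a single application of strict convexity yields both the inequality and the equality condition in one stroke.

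First I would dispose of the boundary case: if $a=0$ or $b=0$, then $ab=0 \le \frac{a^p}{p}+\frac{b^q}{q}$, and equality forces both terms on the right to vanish, i.e.\ $a^p=b^q=0$. So I may assume $a,b>0$.

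Fix $b>0$ and define $f:[0,\infty)\to\mathbb{R}$ by
\[
  f(a)=\frac{a^p}{p}+\frac{b^q}{q}-ab.
\]
Then $f'(a)=a^{p-1}-b$ and $f''(a)=(p-1)a^{p-2}>0$ for $a>0$, so $f$ is strictly convex on $(0,\infty)$. The unique critical point is $a_0=b^{1/(p-1)}$, which is therefore the global minimum. The key algebraic identity I need, both of which follow from rewriting $\frac{1}{p}+\frac{1}{q}=1$ as $p+q=pq$, is $\frac{1}{p-1}=\frac{q}{p}$ together with $\frac{q}{p}+1=q$. Using these,
\[
  f(a_0)=\frac{(b^{q/p})^p}{p}+\frac{b^q}{q}-b^{q/p}\cdot b
  =\frac{b^q}{p}+\frac{b^q}{q}-b^{q/p+1}
  =b^q\left(\frac{1}{p}+\frac{1}{q}\right)-b^q=0.
\]

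Hence $f(a)\ge f(a_0)=0$ for all $a\ge 0$, which is precisely Young's inequality. By the strict convexity of $f$, equality $f(a)=0$ holds if and only if $a=a_0=b^{q/p}$, i.e.\ (raising both sides to the $p$-th power) $a^p=b^q$. There is no serious obstacle here: the only point needing care is the verification that $\frac{q}{p}+1=q$ makes $f(a_0)$ collapse to zero, and this is immediate from the conjugate-exponent relation $\frac{1}{p}+\frac{1}{q}=1$. An equivalent route, should a more slick exposition be preferred, is to invoke strict concavity of $\log$ on $(0,\infty)$ to obtain $\log\!\bigl(\tfrac{a^p}{p}+\tfrac{b^q}{q}\bigr)\ge\tfrac{1}{p}\log a^p+\tfrac{1}{q}\log b^q=\log(ab)$, with equality iff $a^p=b^q$, and then exponentiate.
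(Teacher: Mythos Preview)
Your proof is correct and complete: the calculus argument via strict convexity of $f(a)=\frac{a^p}{p}+\frac{b^q}{q}-ab$ cleanly handles both the inequality and the equality case, and your verification that $f(a_0)=0$ using the conjugate-exponent identities $\frac{1}{p-1}=\frac{q}{p}$ and $\frac{q}{p}+1=q$ is accurate.

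As for comparison with the paper: there is nothing to compare, because the paper does not prove this fact at all. Young's inequality is simply stated as Fact~\ref{fact6} without proof, treated as a standard result from analysis. So your write-up actually supplies something the paper omits. Either of your two routes (the direct calculus minimization or the concavity-of-$\log$ argument you mention at the end) would be a perfectly acceptable textbook proof; the latter is slightly more succinct if you want to trim the exposition.
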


\begin{fact}\label{yusuan}
  Let $a, b, u, v, f, g$ be positive real numbers. If $a u + b v = 1$, then
  \[
    f u + g v \leq \max \left\{ \frac{f}{a}, \frac{g}{b} \right\}.
  \]
\end{fact}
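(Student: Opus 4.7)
The plan is to exploit the constraint $au+bv=1$ in order to rewrite the expression $fu+gv$ as a weighted average of the two candidate bounds $f/a$ and $g/b$. Since the hypothesis guarantees that $a$ and $b$ are strictly positive, I would first multiply and divide each summand by the corresponding coefficient appearing in the constraint, producing the identity
\[
fu + gv \;=\; \frac{f}{a}(au) \;+\; \frac{g}{b}(bv).
\]

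The second step is to interpret the right-hand side as a convex combination. Because all six quantities are positive, the weights $au$ and $bv$ are nonnegative, and by hypothesis they sum to exactly $1$, so they form legitimate convex coefficients. Any convex combination of two nonnegative numbers is at most the larger of them, which yields
\[
fu + gv \;\leq\; \max\!\left\{\frac{f}{a},\, \frac{g}{b}\right\}\bigl(au + bv\bigr) \;=\; \max\!\left\{\frac{f}{a},\, \frac{g}{b}\right\},
\]
as required.

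There is no substantive obstacle in this argument. The only point worth flagging is that the hypothesis $au+bv=1$ is precisely the normalization that turns the linear form $fu+gv$ into a bona fide weighted average of $f/a$ and $g/b$; without this normalization the conclusion would fail, since one could then scale $u$ and $v$ independently to make the left-hand side arbitrarily large. Positivity of $a$ and $b$ is used only to ensure that the divisions $f/a$ and $g/b$ are well defined.
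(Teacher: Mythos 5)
Your argument is correct and is essentially identical to the paper's: both rewrite $fu = (au)\cdot\frac{f}{a}$ and $gv = (bv)\cdot\frac{g}{b}$, bound each factor by $\max\{f/a, g/b\}$, and sum using $au + bv = 1$. Phrasing it as a convex combination is just a mild repackaging of the same computation.
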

\begin{proof}
  Note that $u$ and $v$ are positive. Then
\[
  f u = a u\cdot\frac{f}{a}\leq a u \cdot \max \biggl\{ \frac{f}{a}, \frac{g}{b} \biggr\}
\]
and
\[
  g v =b v\cdot\frac{g}{b}\leq b v \cdot \max \biggl\{ \frac{f}{a}, \frac{g}{b} \biggr\}.
\]
Summing these inequalities and using $a u + b v = 1$, we obtain
  \[
    f u + g v \leq (a u + b v) \max \left\{ \frac{f}{a}, \frac{g}{b} \right\} = \max \left\{ \frac{f}{a}, \frac{g}{b} \right\}. \qedhere
  \]
\end{proof}

\section{Proof of Theorem \ref{criterion}.}
 \noindent
 Suppose that $\mathcal{F}, \sigma, n$ and $\mathcal{H}_n$ satisfy \eqref{condition 1} and \eqref{condition 2} for all integers  $n \geq N$.
 A standard argument shows that the quotient \scalebox{0.95}{$\mathrm{ex}_r(n, \mathcal{F}) /\binom{n}{r}$} is non-increasing and converges to $\pi(\mathcal{F}).$
 Moreover, there exist $0 < \tau \ll \varepsilon$ and $N_1$ satisfying \scalebox{0.95}{$\pi(\mathcal{F}) \leq \mathrm{ex}_r(n, \mathcal{F}) \big/ \binom{n}{r}  < \pi(\mathcal{F})+\tau$ for all $n \geq N_1$}.
Combining with \eqref{condition 2}, we derive the bounds
$$
2 r \pi(\mathcal{F}) \frac{\binom{n}{r}}{n}-\sigma n^{r-2} \le q\left(\mathcal{H}_n\right) \le 2 r(\pi(\mathcal{F})+\tau) \frac{\binom{n}{r}}{n}+\sigma n^{r-2}.
$$
Accordingly, there exists a constant  $\gamma$ satisfying $\tau<\gamma\ll \sigma$  such that the following statement holds:
\begin{align}
 \Bigg(2 \frac{\pi(\mathcal{F})}{(r-1)!}-\gamma \Bigg) n^{r-1}\leq   q\left(\mathcal{H}_n\right)\leq\Bigg(2 \frac{\pi(\mathcal{F})}{(r-1)!}+\gamma \Bigg) n^{r-1}.\label{condition 3}
\end{align}
 Using \eqref{condition 1} and \eqref{condition 2}, we obtain
\begin{align}
   & \biggl| q(\mathcal{H}_n) - q(\mathcal{H}_{n-1}) - \frac{2\pi(\mathcal{F})}{(r-2)!} n^{r-2} \biggr| \notag \\[2mm]
  \leq \, & \biggl| 2 r\frac{ \mathrm{ex}_r(n, \mathcal{F})}{n} -2 r  \frac{\mathrm{ex}_r(n-1, \mathcal{F})}{n-1} - \frac{2\pi(\mathcal{F})}{(r-2)!} n^{r-2} \biggr| + 2 \sigma n^{r-2} \notag \\[2mm]
  = \, & \biggl| \frac{2 r}{n} \Bigl( \mathrm{ex}_r(n, \mathcal{F}) - \mathrm{ex}_r(n-1, \mathcal{F}) - \frac{\pi(\mathcal{F})}{(r-1)!} n^{r-1} \Bigr) + \frac{2 \pi(\mathcal{F})}{(r-1)!} n^{r-2} - \frac{2 r \mathrm{ex}_r(n-1, \mathcal{F})}{n(n-1)} \biggr|\notag\\[2mm]
  & + 2 \sigma n^{r-2} \notag \\[2mm]
  \leq \, & 2\biggl| \frac{ \pi(\mathcal{F})}{(r-1)!} n^{r-2} -  \frac{r \mathrm{ex}_r(n-1, \mathcal{F})}{n(n-1)} \biggr| + (2 r+2) \sigma n^{r-2}\notag \\[2mm]
  \leq \, & (2 r+4) \sigma n^{r-2}. \label{condition 4}
\end{align}

\medskip

\noindent\textbf{Proof of Theorem \ref{criterion}}.  Let $\mathcal{F}$ and $\mathcal{H}_n$ be the  families of $r$-graphs described in Theorem \ref{criterion}. Let $\mathcal{G}$ be an $n$-vertex $\mathcal{F}$-free $r$-graph, and let  $\mathbf{x}=(x_1, \dots, x_n)$ be a non-negative unit eigenvector corresponding to $q(\mathcal{G})$. In the following, we will show that if $q(\mathcal{G}) \geq q(\mathcal{H}_n)$, then $\mathcal{G} \in \mathcal{H}_n$. We prove by contradiction. If $\mathcal{G} \notin \mathcal{H}_n$, then we have the following inequalities immediately:
\begin{gather}
   \delta(\mathcal{G}) \leq \Bigl( \frac{\pi(\mathcal{F})}{(r-1)!} - \varepsilon \Bigr) n^{r-1}, \label{hypo.condition.1} \\
   q(\mathcal{G}) \geq q(\mathcal{H}_n) \geq \Bigl( 2 \frac{\pi(\mathcal{F})}{(r-1)!} - \gamma \Bigr) n^{r-1}. \label{hypo.condition.2}
\end{gather}
Using the above conditions, we establish a critical property of the entries of the eigenvector $\mathbf{x}$ corresponding to $q(\mathcal{G})$. Let  $w\in V(\mathcal{\mathcal{\mathcal{G}}})$ with $x_{w}=\min \{x_{1}, \dots, x_{n}\}$.

\begin{lemma} \label{min}
 For sufficiently large $n$, we have
$$x_w^r<\frac{1-\varepsilon}{n}.$$
\end{lemma}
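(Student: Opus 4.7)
The plan is to argue by contradiction: suppose $x_w^r \geq (1-\varepsilon)/n$. Since $\|\mathbf{x}\|_r = 1$ and $x_v \geq x_w$ for every $v$, this immediately yields the near-uniformity estimate $\sum_v (x_v^r - x_w^r) = 1 - n x_w^r \leq \varepsilon$, and in particular $x_v^r \leq x_w^r + \varepsilon \leq 1/n + \varepsilon$ for every $v$. The goal is then to exploit the low min-degree hypothesis $\delta(\mathcal{G}) \leq (\pi(\mathcal{F})/(r-1)! - \varepsilon) n^{r-1}$, which holds because $\mathcal{G} \notin \mathcal{H}_n$, to show that the eigenvalue $q(\mathcal{G})$ cannot in fact reach $q(\mathcal{H}_n)$.

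Concretely, I would pick a vertex $u \in V(\mathcal{G})$ with $d(u) = \delta(\mathcal{G})$ and apply the eigenvalue equation at $u$:
\[
  \bigl(q(\mathcal{G}) - d(u)\bigr)\, x_u^{r-1} \;=\; \sum_{e \ni u} x^{e \setminus \{u\}} \;=:\; P_u.
\]
Using $q(\mathcal{G}) \geq q(\mathcal{H}_n) \geq (2\pi(\mathcal{F})/(r-1)! - \gamma) n^{r-1}$ from \eqref{condition 3} and the bound on $d(u)$, the left-hand side is at least $(\pi(\mathcal{F})/(r-1)! + \varepsilon - \gamma) n^{r-1} \cdot x_u^{r-1}$, which is strictly larger than $d(u) x_u^{r-1}$ by roughly $2\varepsilon n^{r-1} \cdot x_u^{r-1}$; this excess crucially uses $\pi(\mathcal{F}) > 1/2$ together with $\gamma \ll \varepsilon$. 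I would then upper-bound $P_u$ via the near-uniformity by writing $x^{e \setminus \{u\}} = x_u^{r-1} + (\text{correction})$ and using the codegree bound $d_{uv} \leq \binom{n-2}{r-2}$ along with $\sum_v (x_v^r - x_w^r) \leq \varepsilon$ to show that the aggregate correction is $o(\varepsilon n^{r-1} x_u^{r-1})$. This would force $P_u \leq d(u)\, x_u^{r-1}(1 + o_\varepsilon(1))$, contradicting the strict lower bound above and completing the proof.

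The main obstacle is the sharpness of the upper bound on $P_u$: the crude estimate $P_u \leq d(u)\, x_M^{r-1}$ fails because $x_M^r$ may be as large as $1/n + \varepsilon$, so the ratio $(x_M/x_u)^{r-1}$ does not tend to $1$ as $\varepsilon \to 0$ with $n$ large. To refine this, I would introduce a threshold $\theta > 0$ and split the link of $u$ according to whether $x_v^r > (1 + \theta)/n$: the ``bulk'' part contributes $d(u)\, x_u^{r-1}$ up to an $O(\theta^{1/r})$ factor, while the ``outlier'' part is controlled by the cardinality bound $|S_\theta| \leq \varepsilon n/\theta$. Balancing $\theta$ against $\varepsilon$ and $n$, and converting the resulting power-type estimates via H\"older's inequality (Fact~\ref{Holder}), Young's inequality (Fact~\ref{fact6}), and the logarithmic comparison inequalities in Facts~\ref{fact1} and~\ref{fact2}, one should be able to extract the needed error bound. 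The hypothesis $\pi(\mathcal{F}) > 1/2$ enters precisely at this final step: it guarantees a strictly positive gap $q(\mathcal{G}) - 2 d(u) \gtrsim 2\varepsilon n^{r-1}$ that cannot be absorbed by any near-uniformity error arising from the assumption $x_w^r \geq (1-\varepsilon)/n$.
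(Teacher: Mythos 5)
Your proposal follows the same skeleton as the paper up to a point: argue by contradiction assuming $x_w^r\ge(1-\varepsilon)/n$, pick $u$ of minimum degree, write down the eigenequation $\bigl(q(\mathcal{G})-d(u)\bigr)x_u^{r-1}=\sum_{e\ni u}x^{e\setminus\{u\}}=:P_u$, lower-bound the left side using \eqref{condition 3} and the minimum-degree hypothesis, and look for a contradicting upper bound on $P_u$. That part is exactly the paper's strategy, and your observation $1-n x_w^r\le\varepsilon$ is the same starting point. But the crucial step --- upper-bounding $P_u$ --- is handled entirely differently, and I don't think your version closes.

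The paper does not try to compare $P_u$ to $d(u)\,x_u^{r-1}$ at all. Instead it applies H\"older's inequality (Fact~\ref{Holder}) to get $P_u\le d(u)^{(r-1)/r}\bigl(\sum_{e\in E_u}(x^{e\setminus\{u\}})^r\bigr)^{1/r}$, and then bounds the inner sum by writing it as a sum over \emph{all} ordered $(r-1)$-tuples in $V\setminus\{u\}$ minus the non-link tuples. The full sum factorizes cleanly as $\frac{1}{(r-1)!}\bigl(\sum_{j\ne u}x_j^r\bigr)^{r-1}\le\frac{1}{(r-1)!}$, and the subtracted part is lower-bounded using precisely the hypothesis $x_v^r\ge(1-\varepsilon)/n$ for every $v$ together with the fact that $d(u)$ is small. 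This yields an upper bound on $P_u$ that is completely independent of $x_u$, and the contradiction drops out of a clean inequality \eqref{eq10}. The factorization of the $r$-th power sum is the key trick, and it does not appear anywhere in your proposal.

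Your alternative --- writing $x^{e\setminus\{u\}}=x_u^{r-1}+(\text{correction})$ and controlling the correction via a threshold $\theta$ --- has a genuine quantitative obstruction. Take the case $u=w$. If you choose $\theta$ small enough that the ``bulk'' factor $((1+\theta)/(1-\varepsilon))^{(r-1)/r}$ stays within $1+O(\varepsilon)$ of $1$ (which forces $\theta\lesssim\varepsilon$), then the outlier set can have size up to $\varepsilon n/\theta$ and total contribution to $P_u$ of order $(\varepsilon/\theta)\,n^{r-2+1/r}$, which is $\gg\varepsilon\,n^{r-2+1/r}$ and thus swamps the gap $q(\mathcal{G})-2d(u)\gtrsim 2\varepsilon n^{r-1}\cdot x_u^{r-1}\approx 2\varepsilon\,n^{r-2+1/r}$. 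If instead you choose $\theta$ large enough (a constant) to tame the outliers, the bulk contribution exceeds $d(u)x_u^{r-1}$ by a constant factor, again overwhelming the $O(\varepsilon)$ gap. The two requirements on $\theta$ are incompatible, so the balancing step you defer to --- ``one should be able to extract the needed error bound'' --- is not a routine check but the exact place where the argument breaks. You would need a substantially finer estimate (or, most plausibly, exactly the H\"older-plus-factorization device the paper uses) to rescue it.
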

\begin{proof}
Suppose to the contrary that $x_w^r\geq ({1-\varepsilon})/{n}$. Then $x_v^r\geq(1-\varepsilon)/{n} $ for every $v\in V(\mathcal{G})$. Let $u\in V(\mathcal{G})$ be a vertex with $d(u)=\delta(\mathcal{G})$. From the eigenequation for $q(\mathcal{G})$ at the vertex $u$, we obtain
$$
\big(q(\mathcal{G)}-\delta(\mathcal{G})\big) x_u^{r-1}=\sum_{e \in E_u} x^{e\setminus\{u\}}.
$$
By combining \eqref{hypo.condition.1} and \eqref{hypo.condition.2}, we derive a lower bound for the sum $\sum_{e \in E_u} x^{e\setminus\{u\}}$ as follows:
\begin{align}
\sum_{e \in E_u} x^{e\setminus\{u\}}
&= \big(q(\mathcal{G})-\delta(\mathcal{G})\big) x_u^{r-1}\geq  \Big(\frac{\pi(\mathcal{F})}{(r-1)!} +\varepsilon-\gamma\Big) \left(1-\varepsilon\right)^\frac{r-1}{r}n^{r-2+\frac{1}{r}}.\label{lower bound}
\end{align}
On the other hand, we can establish an upper bound for $\sum_{e \in E_u} x^{e\setminus\{u\}}.$
Firstly, it follows from Fact~\ref{Holder}
 that
\begin{align}
    \sum_{e \in E_u} x^{e\setminus\{u\}}\leq {d(u)}^\frac{r-1}{r}\Bigg(\sum_{e \in E_u} \big(x^{e\setminus\{u\}}\big)^r\Bigg)^{\frac{1}{r}}.\label{upper 1}
\end{align}
Next, we  estimate the upper bound for \scalebox{0.95}{$\sum_{e \in E_u} \big(x^{e\setminus\{u\}}\big)^r$}. One can verify that
\begin{align}
 \sum_{e \in E_u} \Bigl(x^{e\setminus\{u\}}\Bigr)^r
 &= \sum_{S \in E_u^{r-1}}\bigl(x^S\bigr)^r- \! \sum_{S \in E_u^{r-1} \setminus L(u)}\bigl(x^S\bigr)^r \notag \\[0mm]
  &= \frac{1}{(r-1)!} \Bigg( \sum_{\bm{i}\in V_u^{r-1}} \bigl(x^{\bm{i}}\bigr)^r - \!\! \sum_{\bm{i}\in V_u^{r-1} \setminus L^o(u)} \bigl(x^{\bm{i}}\bigr)^r \Bigg) \notag \\[1mm]
  &= \frac{1}{(r-1)!} \Bigg(\prod_{j=1}^{r-1} \sum_{i_j \in V(\mathcal{G}-u)} x_{i_j}^r - \!\! \sum_{\bm{i} \in V_u^{r-1} \setminus L^o(u)}\bigl(x^{\bm{i}}\bigr)^r\Bigg) \notag \\[2mm]
  &\leq \frac{1}{(r-1)!} \left( 1 - \big( (n-1)^{r-1} - ( \pi(\mathcal{F}) - (r-1)! \varepsilon ) n^{r-1} \big) \cdot \left( \frac{1-\varepsilon}{n} \right)^{r-1} \right) \notag \\[2mm]
  &\leq \frac{1}{(r-1)!} \big( 1 - ( 1 - \pi(\mathcal{F}) ) (1-\varepsilon)^{r-1} \big). \label{upper 2}
\end{align}
Combining \eqref{upper 1} and \eqref{upper 2}, we conclude that
\begin{align*}
  \sum_{e \in E_u} x^{e \setminus \{u\}} \leq \biggl( \frac{\pi(\mathcal{F})}{(r-1)!} \biggr)^{\frac{r-1}{r}} \, \biggl( \frac{1}{(r-1)!} \Bigl( 1 - \bigl( 1 - \pi(\mathcal{F}) \bigr) (1-\varepsilon)^{r-1} \Bigr) \biggr)^{\frac{1}{r}} \, n^{r-2+\frac{1}{r}}.
\end{align*}
Together with \eqref{lower bound}, we deduce that
\begin{eqnarray}\label{eq10}
 & & \bigg( \frac{\pi(\mathcal{F})}{(r-1)!} + \varepsilon - \gamma \bigg) (1-\varepsilon)^{\frac{r-1}{r}}\nonumber\\
 & \leq &
  \bigg( \frac{\pi(\mathcal{F})}{(r-1)!} \bigg)^{\frac{r-1}{r}}
  \bigg( \frac{1}{(r-1)!} \Big( 1 - \big( 1 - \pi(\mathcal{F}) \big) (1-\varepsilon)^{r-1} \Big) \bigg)^{\frac{1}{r}}.
\end{eqnarray}
Let
  \[
    g = \frac{(r-1)! \left( \frac{\pi(\mathcal{F})}{(r-1)!} + \varepsilon - \gamma \right) (1-\varepsilon)^{(r-1)/r}}{\pi(\mathcal{F})^{(r-1)/r} \left( 1 - (1-\pi(\mathcal{F}))(1-\varepsilon)^{r-1} \right)^{1/r}}.
  \]
 We will show that   $g>1$, which contradicts \eqref{eq10} and thus completes the proof.
For convenience, we abbreviate  $\pi(\mathcal{F})$  to $\pi$. We simplify  $g$  as follows:
  \begin{align*}
    g &= \frac{\Big( \pi + (r-1)!(\varepsilon-\gamma) \Big) (1-\varepsilon)^{(r-1)/r}}{\pi^{(r-1)/r} \Big( 1 - (1-\pi)(1-\varepsilon)^{r-1} \Big)^{1/r}} \\[2mm]
      &= \left( \frac{\pi \Big(1 + (r-1)!(\varepsilon-\gamma) / \pi\Big)^r (1-\varepsilon)^{r-1}}{1 - (1-\pi)(1-\varepsilon)^{r-1}} \right)^{1/r}.
  \end{align*}
By Bernoulli's inequality, we obtain
$$ g \ge \left( \frac{\pi + r!(\varepsilon-\gamma)}{\left( \frac{1}{1-\varepsilon}\right)^{r-1} - (1-\pi)} \right)^{1/r}.$$
Now, it suffices to show that
  \[
    T(\varepsilon,\gamma,r) = 1 + r!(\varepsilon-\gamma) - \left( \frac{1}{1-\varepsilon} \right)^{r-1} > 0.
  \]
Using Fact~\ref{G-B-T}, we obtain
  \begin{align*}
         T(\varepsilon,\gamma,r)&= 1 + r!(\varepsilon-\gamma)-\left(1+\frac{\varepsilon}{1-\varepsilon}\right)^{r-1} \\[2mm]
    &\ge r!(\varepsilon-\gamma) - \frac{(r-1/2)\varepsilon}{1-\varepsilon}\\[2mm]
    & >\varepsilon \left( r! - \frac{r-1/2}{1-\varepsilon} \right) - r!\gamma.
  \end{align*}
 Since $\varepsilon$ is sufficiently small, we have $r! - \frac{r-1/2}{1-\varepsilon} > 0$. Furthermore, the term $\varepsilon\Bigl(r! - \frac{r-1/2}{1-\varepsilon}\Bigr)$ dominates $r!\gamma$  as $\gamma \ll \varepsilon$, implying that $T(\varepsilon,\gamma,r) > 0$.
 The proof is complete.
\end{proof}

\medskip
\begin{lemma}\label{calcu}
 $$q(\mathcal{G}-w) \geq \frac{1-r x_w^r}{1-x_w^r} q(\mathcal{G})-\frac{n^{r-2}}{(r-2)!}\frac{1-(n-1)x_w^r}{1-x_w^r}.$$
\end{lemma}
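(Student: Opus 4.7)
The plan is to lower bound $q(\mathcal{G}-w)$ via the Rayleigh characterization \eqref{eq3} using a restricted-and-renormalized test vector, and then to convert the resulting ``contribution of $w$'' into the claimed explicit form with the help of the eigenequation at $w$. Concretely, I would take $\mathbf{y}$ on $V(\mathcal{G})\setminus\{w\}$ defined by $y_v = x_v/(1-x_w^r)^{1/r}$, so that $\|\mathbf{y}\|_r=1$. Plugging $\mathbf{y}$ into \eqref{eq3} and using that $\mathbf{x}$ is a non-negative unit eigenvector for $q(\mathcal{G})$ gives
\[
(1-x_w^r)\,q(\mathcal{G}-w)\,\geq\, q(\mathcal{G}) - \sum_{e\in E_w}\Bigl(\sum_{v\in e} x_v^r + r x^e\Bigr).
\]

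The next step is to evaluate the bracketed sum. Multiplying the eigenequation at $w$ by $x_w$ yields $\sum_{e\in E_w} x^e = (q(\mathcal{G})-d(w))x_w^r$, and switching the order of summation produces $\sum_{e\in E_w}\sum_{v\in e\setminus\{w\}} x_v^r = \sum_{v\neq w} c(v,w)\,x_v^r$, where $c(v,w)$ is the codegree of the pair $\{v,w\}$ in $\mathcal{G}$. Substituting these identities and splitting $\sum_{v\in e}x_v^r = x_w^r + \sum_{v\in e\setminus\{w\}}x_v^r$, the displayed inequality becomes
\[
(1-x_w^r)\,q(\mathcal{G}-w)\,\geq\,(1-rx_w^r)\,q(\mathcal{G}) + (r-1)d(w)x_w^r - \sum_{v\neq w} c(v,w)\,x_v^r.
\]

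So the lemma reduces to proving
\[
\sum_{v\neq w} c(v,w)\,x_v^r \,\leq\, (r-1)d(w)x_w^r + \frac{n^{r-2}}{(r-2)!}\bigl(1-(n-1)x_w^r\bigr).
\]
Using the handshake identity $\sum_{v\neq w}c(v,w) = (r-1)d(w)$, I would rewrite the left-hand side as $(r-1)d(w)x_w^r + \sum_{v\neq w} c(v,w)(x_v^r - x_w^r)$. Because $w$ is a minimum-entry vertex, every difference $x_v^r-x_w^r$ is non-negative, and because every codegree satisfies $c(v,w)\leq \binom{n-2}{r-2}\leq n^{r-2}/(r-2)!$, we obtain
\[
\sum_{v\neq w} c(v,w)(x_v^r - x_w^r) \,\leq\, \frac{n^{r-2}}{(r-2)!}\sum_{v\neq w}(x_v^r - x_w^r) = \frac{n^{r-2}}{(r-2)!}(1-nx_w^r),
\]
which is bounded above by $\frac{n^{r-2}}{(r-2)!}(1-(n-1)x_w^r)$, as required.

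The argument is essentially bookkeeping once the correct test vector is chosen; no substantive obstacle should appear. The one point that needs care is coupling the uniform codegree bound $c(v,w)\leq \binom{n-2}{r-2}$ with the minimality of $x_w$: applying that bound directly to $\sum c(v,w)x_v^r$ would only give $\binom{n-2}{r-2}(1-x_w^r)$, which is not tight enough; it is the rewriting into the form $\sum c(v,w)(x_v^r - x_w^r)$ that shaves off the extra $x_w^r$ contribution and produces the correct constant $1-(n-1)x_w^r$.
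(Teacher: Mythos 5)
Your proof is correct and follows the same overall strategy as the paper: lower-bound $q(\mathcal{G}-w)$ via the Rayleigh characterization with the restricted, renormalized test vector, then use the eigenequation at $w$ to eliminate $\sum_{e\in E_w}x^e$, leaving only the term $\sum_{e\in E_w}\sum_{v\in e\setminus\{w\}}x_v^r$ to control. Where the two proofs differ is in the bookkeeping for that last term. The paper converts it to sums over ordered $(r-1)$-tuples and uses a complement trick, writing the link sum as the total over $V_w^{r-1}$ minus the non-link part, then bounding the first by $(r-1)(n-1)^{r-2}$ (via $\sum x_v^r\le 1$) and the second from below by $\lvert V_w^{r-1}\setminus L^o(w)\rvert\,(r-1)x_w^r$ (via minimality of $x_w$). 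You instead switch the order of summation to express the same quantity as $\sum_{v\ne w}c(v,w)x_v^r$, apply the handshake identity $\sum_{v\ne w}c(v,w)=(r-1)d(w)$ to peel off the baseline contribution $(r-1)d(w)x_w^r$, and then bound the residual $\sum_{v\ne w}c(v,w)(x_v^r-x_w^r)$ by $\binom{n-2}{r-2}(1-nx_w^r)$ using the same two facts (minimality of $x_w$ and $\|\mathbf{x}\|_r=1$). The resulting intermediate constants differ slightly --- your route gives $\tfrac{n^{r-2}}{(r-2)!}(1-nx_w^r)$ while the paper's gives $\tfrac{(n-1)^{r-2}}{(r-2)!}(1-(n-1)x_w^r)$, and the $(1-r)d(w)x_w^r$ cancellation happens in the eigenequation step for you versus at the end for the paper --- but both are dominated by the target $\tfrac{n^{r-2}}{(r-2)!}(1-(n-1)x_w^r)$. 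Your codegree formulation is somewhat cleaner and avoids the ordered/unordered conversion; the underlying ingredients are identical.
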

\begin{proof}
Firstly, by Theorem \ref{relaigh} and (\ref{eq3}), we derive that
\begin{align}
  q(\mathcal{G})  &= \sum_{e \in E(\mathcal{G})}  \biggl(\sum_{v \in e}x_v^r + r x^e \biggr) \notag \\[1mm]
  &= \sum_{e \in E(\mathcal{G}-w)}\biggl( \sum_{v \in e} x_v^r + r x^e \biggr) + \sum_{e \in E_w} \biggl( x_w^r + \sum_{v \in e \setminus \{w\}} x_v^r + r x_w \prod_{v \in e \setminus \{w\}} x_v \biggr) \notag \\[2mm]
  &\leq \bigl( 1 - x_w^r \bigr) q(\mathcal{G}-w) + d(w) x_w^r + \sum_{e \in E_w} \sum_{v \in e \setminus \{w\}} x_v^r + r x_w \sum_{e \in E_w} \prod_{v \in e \setminus \{w\}} x_v. \label{calcu 1}
\end{align}From the eigenequation for $q(\mathcal{\mathcal{G}})$ at the vertex $w$, we obtain
\[
  \bigl( q(\mathcal{G}) - d(w) \bigr) x_w^{r-1} = \sum_{e \in E_w} \prod_{v \in e \setminus \{w\}} x_v.
\]
Combining this identity with  \eqref{calcu 1}, we deduce that
\begin{align}
    q(\mathcal{G}-w) \geq\frac{\left(1-r x_w^r\right)}{1-x_w^r} q(\mathcal{G})-\frac{(1-r)d(w) x_w^r+\sum_{e \in E_w} \sum_{v \in e \setminus\{w\}} x_v^r}{1-x_w^r}.\label{calcu.2}
\end{align}
Next,  it follows from a direct calculation that
\begin{align}
\displaystyle\sum_{e \in E_w} \displaystyle\sum_{v \in e \setminus\{w\}} x_v^r
& =\displaystyle\sum_{S \in E_w^{r-1}} \displaystyle\sum_{v \in S} x_v^r-\displaystyle\sum_{S \in E_w^{r-1} \setminus L(w)} \displaystyle\sum_{v \in S} x_v^r \notag\\[2mm]
& =\frac{1}{(r-1)!}\left(\displaystyle\sum_{\left(i_1, \cdots, i_{r-1}\right) \in V_w^{r-1}}\displaystyle\sum_{j=1}^{r-1} x_{i_j}^r-\hspace{-4mm}\displaystyle\sum_{\left(i_1, \cdots, i_{r-1}\right) \in {V_w^{r-1}\setminus L^o(w)} }\displaystyle\sum_{j=1}^{r-1} x_{i_j}^r\right) \notag \\[2mm]
& \leq \frac{1}{(r-1)!}\Big((r-1) (n-1)^{r-2}-\left((n-1)^{r-1}-(r-1)!d(w)\right)(r-1) x_w^r\Big) \notag \\[2mm]
&=\frac{1}{(r-2)!}\Big((n-1)^{r-2}-\left((n-1)^{r-1}-(r-1)!d(w)\right) x_w^r\Big).\notag
\end{align}
Substituting the above inequality into (\ref{calcu.2}) gives the desired result
$$q(\mathcal{G}-w) \geq \frac{1-r x_w^r}{1-x_w^r} q(\mathcal{G})-\frac{n^{r-2}}{(r-2)!}\frac{1-(n-1)x_w^r}{1-x_w^r}.$$
This completes the proof.
\end{proof}

\medskip

\begin{lemma}\label{use calcu}
 For sufficiently large $n$, we have
\begin{align}
q(\mathcal{G}-w)\geq q(\mathcal{G})\left(1-\frac{r-1-\varepsilon^2/2}{n-1}\right),\label{use calcu.1}
\end{align}
and
\begin{align}
q(\mathcal{G}-w)> q(\mathcal{H}_{n-1})\label{use calcu.2}.
\end{align}
\end{lemma}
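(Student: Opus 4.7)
Both inequalities in Lemma~\ref{use calcu} will follow by feeding the bound $x_w^r < (1-\varepsilon)/n$ from Lemma~\ref{min} into the estimate of Lemma~\ref{calcu}. Set $t := x_w^r$, so that $0 < t < (1-\varepsilon)/n$.

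To establish \eqref{use calcu.1}, the plan is to subtract $q(\mathcal{G})\bigl(1 - \frac{r-1-\varepsilon^2/2}{n-1}\bigr)$ from the right-hand side of Lemma~\ref{calcu} and multiply through by the positive factor $(1-t)$. A direct expansion shows that the resulting expression is \emph{affine in $t$}, so verifying non-negativity reduces to checking it at the two endpoints $t = 0$ and $t = (1-\varepsilon)/n$. In both cases, after applying the lower bound $q(\mathcal{G}) \geq q(\mathcal{H}_n) \geq (2\pi(\mathcal{F})/(r-1)! - \gamma)\, n^{r-1}$ from \eqref{condition 3}, the required inequality collapses (after dividing out the common factor $n^{r-2}/(r-2)!$) to an estimate that reduces essentially to $2\pi(\mathcal{F}) > 1$, which holds by the hypothesis $\pi(\mathcal{F}) > 1/2$ together with $\gamma \ll \varepsilon$ and $n$ sufficiently large.

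For \eqref{use calcu.2}, combining \eqref{use calcu.1} with $q(\mathcal{G}) \geq q(\mathcal{H}_n)$ yields
\[
q(\mathcal{G}-w) - q(\mathcal{H}_{n-1}) \;\geq\; \bigl(q(\mathcal{H}_n) - q(\mathcal{H}_{n-1})\bigr) \;-\; q(\mathcal{H}_n) \cdot \frac{r-1}{n-1} \;+\; q(\mathcal{H}_n) \cdot \frac{\varepsilon^2/2}{n-1}.
\]
By \eqref{condition 4}, the first difference equals $\frac{2\pi(\mathcal{F})}{(r-2)!}\, n^{r-2}$ up to an additive error of at most $(2r+4)\sigma n^{r-2}$; by \eqref{condition 3}, $q(\mathcal{H}_n)(r-1)/(n-1)$ matches this same leading value up to an $O(\gamma + n^{-1}) n^{r-2}$ error, so these two contributions cancel to leading order. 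The residual term $q(\mathcal{H}_n)\varepsilon^2/(2(n-1)) \geq \frac{\pi(\mathcal{F})\varepsilon^2}{(r-1)!}\, n^{r-2}\,(1-o(1))$ then provides the decisive positive gap, dominating the accumulated $O(\sigma + \gamma) n^{r-2}$ error because the hierarchy $\gamma \ll \sigma \ll \varepsilon$ forces $\sigma, \gamma \ll \varepsilon^2$.

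The main obstacle is tracking the interplay of the small parameters: the $\varepsilon^2/2$ correction built into \eqref{use calcu.1} is engineered precisely so that a gap of order $\varepsilon^2 n^{r-2}$ remains after telescoping $q(\mathcal{H}_n)$ down to $q(\mathcal{H}_{n-1})$, and this gap must beat the $\sigma n^{r-2}$ error of \eqref{condition 4}. The hypothesis $\pi(\mathcal{F}) > 1/2$ is essential throughout, as it is what delivers $2\pi(\mathcal{F}) > 1$ in the endpoint checks for \eqref{use calcu.1} and what gives the residual gap in \eqref{use calcu.2} the correct sign.
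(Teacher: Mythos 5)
Your proposal is correct, but it follows a genuinely different route from the paper's. For \eqref{use calcu.1}, the paper works with the auxiliary quantity $t=\frac{1-nx_w^r}{(n-r)(1-x_w^r)}$, rewrites the right-hand side of Lemma~\ref{calcu} as $\frac{q(\mathcal{G})}{n-1}(1+(r-1)t)-\frac{n^{r-2}}{(r-2)!}t - 2n^{r-4}$, and then splits $(r-1)t$ into a main part (kept) and a part whose coefficient is shown to be nonnegative by \eqref{hypo.condition.2} under the running assumption $0<\varepsilon<\frac12\bigl(1-\frac{1}{2\pi(\mathcal{F})}\bigr)$, eventually invoking Lemma~\ref{min} to bound $t$ from below. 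Your observation that clearing the denominator $1-x_w^r$ makes the inequality \emph{affine} in $t:=x_w^r$, so that it suffices to check the two endpoints $t=0$ and $t=(1-\varepsilon)/n$, is a cleaner reduction; at both endpoints the inequality indeed boils down (after applying \eqref{condition 3}) to $2\pi(\mathcal{F})>1$ plus small-error bookkeeping, exactly as you claim. For \eqref{use calcu.2}, the paper does \emph{not} deduce it from \eqref{use calcu.1}; instead it goes back to the intermediate inequality \eqref{note}, re-splits the $t$-term differently, and obtains the stronger additive form $q(\mathcal{G}-w)\geq q(\mathcal{G})\bigl(1-\frac{r-1}{n-1}\bigr)+\frac{\varepsilon^2}{2}n^{r-2}$ before comparing with $q(\mathcal{H}_{n-1})$ via \eqref{condition 4}. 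You instead bootstrap directly from \eqref{use calcu.1}, giving the slightly weaker additive gap $q(\mathcal{H}_n)\cdot\frac{\varepsilon^2/2}{n-1}\gtrsim \frac{\pi(\mathcal{F})\varepsilon^2}{(r-1)!}n^{r-2}$, which is smaller by a constant factor but still dominates the $O((\sigma+\gamma)n^{r-2})$ error coming from \eqref{condition 4} and \eqref{condition 3}, so the argument closes. Your route is more economical in that it derives the second inequality from the first; the paper's route yields a sharper intermediate bound at no conceptual cost. Both, as you correctly emphasize, hinge on the convention that $\varepsilon\gg\sigma\gg\gamma$ is strong enough to guarantee $\sigma,\gamma\ll\varepsilon^2$.
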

\begin{proof}
    We first prove the inequality (\ref{use calcu.1}). By Lemma \ref{calcu}, we deduce that
 \begin{align}
\frac{q(\mathcal{G}-w)}{n-r} \geq
& \frac{q(\mathcal{G})}{n-1}\left(1+\frac{r-1}{n-r}\right) \frac{1-r x_w^r}{1-x_w^r}-\frac{n^{r-2}}{(r-2)!} \frac{1-(n-1) x_w^r}{(n-r)\left(1-x_w^r\right)} \notag\\[2mm]
= & \frac{q(\mathcal{G})}{n-1}\left(1+\frac{(r-1)\left(1-n x_w^r\right)}{(n-r)\left(1-x_w^r\right)}\right)-\frac{n^{r-2}}{(r-2)!} \frac{1-(n-1) x_w^r}{(n-r)\left(1-x_w^r\right)} \notag
\end{align}
For convenience, we denote $t:=t(n,r,x_w)=\frac{1-n x_w^r}{(n-r)\left(1-x_w^r\right)}>0.$ Therefore, we have
\begin{align}
  \frac{q(\mathcal{G}-w)}{n-r}
  \geq &\frac{q(\mathcal{G})}{n-1} \bigl( 1+(r-1)t \bigr) - \frac{n^{r-2}}{(r-2)!} t-2n^{r-4} \label{note} \\[2mm]
  =& \frac{q(\mathcal{G})}{n-1} \biggl( 1 + \Bigl( 1 - \frac{1}{2 \pi(\mathcal{F})} - \varepsilon \Bigr) (r-1)t \biggr)\notag \\[2mm]
 &+  \biggl( \Bigl( \frac{1}{2 \pi(\mathcal{F})} + \varepsilon \Bigr) \frac{(r-1)q(\mathcal{G})}{n-1} - \frac{n^{r-2}}{(r-2)!} \biggr) t -2n^{r-4}\notag \\[2mm]
  > &\frac{q(\mathcal{G})}{n-1} \biggl( 1 + \Bigl( 1 - \frac{1}{2 \pi(\mathcal{F})} - \varepsilon \Bigr) (r-1)t \biggr) \notag \\[2mm]
  >& \frac{q(\mathcal{G})}{n-1} \biggl( 1 + \frac{\varepsilon^2}{n} \biggr), \notag
\end{align}
where the second inequality follows from \eqref{hypo.condition.2} and the fact that $\varepsilon \gg \gamma$, and the last inequality follows from Lemma~\ref{min} and \scalebox{0.95}{$0 < \varepsilon < \frac{1}{2} \left( 1 - \frac{1}{2\pi(\mathcal{F})} \right)$}.
This implies that
\[
  q(\mathcal{G}-w) \geq q(\mathcal{G}) \Biggl( 1 - \frac{r-1}{n-1} \Biggr) \Biggl( 1 + \frac{\varepsilon^2}{n} \Biggr) \geq q(\mathcal{G}) \Biggl( 1 - \frac{r-1-\varepsilon^2/2}{n-1} \Biggr).
\]
Thus, the inequality \eqref{use calcu.1} holds.

Next, we proceed to prove inequality \eqref{use calcu.2}. By  \eqref{note}, we derive that
\begin{align}
  \frac{q(\mathcal{G}-w)}{n-r}
  &\geq \frac{q(\mathcal{G})}{n-1} \bigl( 1+(r-1) t \bigr) - \frac{n^{r-2}}{(r-2)!} t -2n^{r-4}\notag \\[2mm]
  &= \frac{q(\mathcal{G})}{n-1} + \biggl( \frac{(r-1)q(\mathcal{G})}{n-1} - \frac{n^{r-2}}{(r-2)!} \biggr) t -2n^{r-4}\notag \\[2mm]
  &\geq \frac{q(\mathcal{G})}{n-1} + \frac{\varepsilon}{2} t n^{r-2},\notag
\end{align}
where the last inequality follows from \eqref{hypo.condition.2} and the condition \scalebox{0.95}{$0 < \varepsilon < \frac{1}{2} \left( 1 - \frac{1}{2\pi(\mathcal{F})} \right)$}.
Thus, we deduce that
\begin{align*}
  q(\mathcal{G}-w) &\geq q(\mathcal{G}) \biggl( 1-\frac{r-1}{n-1} \biggr) + \frac{\varepsilon}{2} \frac{1-n x_w^r}{1-x_w^r} n^{r-2}\\
 &\ge  q(\mathcal{G}) \biggl( 1-\frac{r-1}{n-1} \biggr) + \frac{\varepsilon^2}{2} n^{r-2}.
\end{align*}
From \eqref{condition 4} and \eqref{hypo.condition.2}, we obtain that
\begin{align*}
  q(\mathcal{G})\ge q(\mathcal{H}_n)\ge  q(\mathcal{H}_{n-1}) + \frac{2\pi(\mathcal{F}) }{(r-2)!} n^{r-2} - (2 r+4) \sigma n^{r-2}.
\end{align*}
Combining the two inequalities above, we further obtain
\[  q(\mathcal{G}-w) \ge \Big( q(\mathcal{H}_{n-1}) + \frac{2 \pi(\mathcal{F}) }{(r-2)!}n^{r-2} - (2 r+4) \sigma n^{r-2}\Big) \biggl( 1-\frac{r-1}{n-1} \biggr)+\frac{\varepsilon^2}{2}  n^{r-2} .\]
Then
\begin{eqnarray*}
   & &\frac{q(\mathcal{G}-w)-q(\mathcal{H}_{n-1})}{n^{r-2}}\\[2mm]
   &\ge&   \left( \frac{2 \pi(\mathcal{F})}{(r-2)!} - (2 r+4) \sigma \right) \left( 1-\frac{r-1}{n-1} \right) - \left( \frac{2\pi(\mathcal{F})}{(r-2)!}+(r-1)\gamma \right) \left(1-\frac{1}{n}\right)^{r-2} + \frac{\varepsilon^2}{2}\\[2mm]
   &\ge &  \left( \frac{2\pi(\mathcal{F})}{(r-2)!} - (2 r+4) \sigma \right) \left( 1-\frac{r-1}{n-1} \right) - \left( \frac{2\pi(\mathcal{F})}{(r-2)!}+(r-1)\gamma \right) + \frac{\varepsilon^2}{2}\\[2mm]
   &=&  (2 r+4) \frac{r-n}{n-1}\sigma - \frac{2 \pi(\mathcal{F})}{(r-2)!} \frac{r-1}{n-1} - (r-1) \gamma + \frac{\varepsilon^2}{2}\\
   &\ge & \frac{\varepsilon^2}{2}-(2r+4)\biggl(\sigma+\frac{1}{n-1}+\gamma\biggr)>0,
   \end{eqnarray*}
where the last inequality holds since  $\varepsilon \gg \sigma \gg \gamma > 0 $ and  $n$ is sufficiently large.
So the inequality (\ref{use calcu.2}) holds.
\end{proof}

We are now ready to demonstrate that the assumption $\mathcal{G} \notin \mathcal{H}_n$   leads to a contradiction. Let $N$   be sufficiently large such that Lemmas~\ref{min} and~\ref{use calcu} are applicable. Define
\begin{align}
  n_0 = \Bigl(r! N^{r-1} \mathrm{e}^{2(r-1)^2}\Bigr)^{2/\varepsilon^2}.\label{n_0}
\end{align}
Clearly, $n_0 > N$. Let $\mathbf{x}^n=(x_1^{(n)},x_2^{(n)},\cdots,x_n^{(n)})$ be a non-negative unit eigenvector of $\mathcal{Q}(\mathcal{G})$ corresponding to $q(\mathcal{G})$.  Suppose $w^{(n)}\in V(\mathcal{G})$ such that $x_{w^{(n)}}=\min\left\{x_1^{(n)},x_2^{(n)},\cdots,x_n^{(n)}\right\}$. Recall that $q(\mathcal{G})\ge q(\mathcal{H}_n) \ge \bigl( 2 \pi(\mathcal{F})/(r-1)! - \gamma \bigr) n^{r-1}$ and $\delta(\mathcal{G})\le  \bigl( \pi(\mathcal{F})/{(r-1)!} - \varepsilon \bigr) n^{r-1}$. Then, by Lemma \ref{min}, $\left(x_{w^{(n)}}\right)^r<(1-\varepsilon)/n.$
For $n> n_0$, let $\mathcal{G}_n=\mathcal{G}$ and $\mathcal{G}_{n-1}=\mathcal{G}_n-w^{(n)}$.
Then, it follows from Lemma \ref{use calcu} that
\begin{align}
  q(\mathcal{G}_{n-1}) \geq q(\mathcal{G}_n) \biggl( 1 - \frac{r-1 - \varepsilon^2/2}{n-1} \biggr), \notag
\end{align}
and
\begin{align}
q(\mathcal{G}_{n-1})> q(\mathcal{H}_{n-1}).\notag
\end{align}
By $q(\mathcal{G}_{n-1})> q(\mathcal{H}_{n-1})$, we have $ \mathcal{G}_{n-1}\notin \mathcal{H}_{n-1}$. So $\delta(\mathcal{G}_{n-1})\le  \bigl( \pi(\mathcal{F})/{(r-1)!} - \varepsilon \bigr) (n-1)^{r-1}.$  Let $\mathbf{x}^{n-1}=(x_1^{(n-1)},x_2^{(n-1)},\cdots,x_{n-1}^{(n-1)})$ be a  non-negative unit eigenvector of $\mathcal{Q}(\mathcal{G}_{n-1})$ corresponding to $q(\mathcal{G}_{n-1}).$ Suppose $w^{(n-1)}\in V(\mathcal{G}_{n-1})$ such that $x_{w^{(n-1)}}=\min\left\{x_1^{(n-1)},x_2^{(n-1)},\cdots,x_{n-1}^{(n-1)}\right\}$. Denote $\mathcal{G}_{n-2}=\mathcal{G}_{n-1}-w^{(n-1)}.$ By continuing this operation, we can obtain a sequence of hypergraphs $\mathcal{G}_n,\mathcal{G}_{n-1},\cdots,\mathcal{G}_N$ such that for each $n\ge i\ge N+1$,
\begin{align}
  q(\mathcal{G}_{i-1}) \geq q(\mathcal{G}_i) \biggl( 1 - \frac{r-1 - \varepsilon^2/2}{i-1} \biggr), \notag
\end{align}
and
\begin{align}
q(\mathcal{G}_{i-1})> q(\mathcal{H}_{i-1}).\notag
\end{align}
Then, for $i=N+1$, we have
\begin{align}
  q(\mathcal{G}_N)
  & \geq q(\mathcal{G}_{N+1}) \bigl( 1 - (r-1-\varepsilon^2/2) N^{-1} \bigr) \notag \\[2mm]
  & \geq q(\mathcal{G}_n) \prod_{i=N+1}^n \bigl( 1 - (r-1-\varepsilon^2/2) (i-1)^{-1} \bigr) \notag \\[1mm]
  & \geq q(\mathcal{G}_n) \exp \biggl( - \sum_{i=N+1}^n \Bigl( (r-1-\varepsilon^2/2)(i-1)^{-1} + (r-1)^2 (i-1)^{-2} \Bigr) \biggr) \notag \\[1mm]
  & \geq q(\mathcal{G}_n) \exp \biggl( - (r-1-\varepsilon^2/2) \ln \frac{n-1}{N-1} - (r-1)^2 \pi^2/6 \biggr) \notag \\[2mm]
  & \geq \biggl( \frac{2\pi(\mathcal{F})}{(r-1)!} - \gamma \biggr) n^{r-1} \biggl( \frac{n-1}{N-1} \biggr)^{-(r-1-\varepsilon^2/2)} e^{-(r-1)^2 \pi^2/6} \notag \\[2mm]
  & \geq \biggl(  \frac{2\pi(\mathcal{F})}{(r-1)!} - \gamma \biggr) e^{-(r-1)^2 \pi^2/6} n_0^{\varepsilon^2/2}\ge 2N^{r-1}. \notag
\end{align}
where the third inequality follows from Fact~\ref{fact1}, the fourth follows from Fact~\ref{fact2} together with the identity $\sum_{k=1}^{\infty} k^{-2} = \pi^2 / 6$, and the last inequality holds by \eqref{n_0} and the assumption $\pi(\mathcal{F})>\frac{1}{2}$.
This leads to a contradiction, as
\[
    q(\mathcal{G}_N) = \sum_{e \in E(\mathcal{G}_N)} \bigg( \sum_{v \in e} y_v^r + r y^e \bigg)
    \le 2 \sum_{e \in E(\mathcal{G}_N)} \sum_{v \in e} y_v^r
    = 2 \sum_{v \in V(\mathcal{G}_N)} d(v) y_v^r
    < 2 N^{r-1},
\]
where $\mathbf{y}$ is a  non-negative unit eigenvector of $\mathcal{Q}(\mathcal{G}_{N})$ corresponding to $q(\mathcal{G}_{N}).$ Here, the first inequality follows from the AM-GM inequality. The proof is complete.\qed
\\

\section{Proof of Theorem \ref{Fano}}
In this section, we will prove Theorem \ref{Fano}.  For convenience, we restate it as follows.
\begingroup
\def\thetheorem{\ref{Fano}}
\begin{theorem}
Let $\mathcal{G}$ be a $\mathrm{PG}_2(2)$-free $3$-uniform hypergraph on $n$ vertices. Then for sufficiently large $n$, we have $q(\mathcal{G}) \leq q(\mathcal{B}_n)$, with equality if and only if $\mathcal{G}= \mathcal{B}_n$.
\end{theorem}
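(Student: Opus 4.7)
The plan is to invoke Theorem~\ref{criterion} with $r=3$, $\mathcal{F}=\{\mathrm{PG}_2(2)\}$, and $\pi(\mathcal{F})=3/4$ (established by de Caen and F\"uredi~\cite{decaen2000maximum}). Since $\pi(\mathcal{F})>1/2$, it remains to verify the two hypotheses \eqref{condition 1} and \eqref{condition 2}, and then to upgrade the conclusion $q(\mathcal{G})\le q(\mathcal{H}_n)$ to $q(\mathcal{G})\le q(\mathcal{B}_n)$ with strict uniqueness.

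For \eqref{condition 1}, the exact formula from Theorem~\ref{fano turan number} gives $\operatorname{ex}_3(n,\mathrm{PG}_2(2))=\binom{n}{3}-\binom{\lfloor n/2\rfloor}{3}-\binom{\lceil n/2\rceil}{3}=e(\mathcal{B}_n)$. Subtracting at $n$ and $n-1$ yields the degree of the vertex one adds in building $\mathcal{B}_n$ from $\mathcal{B}_{n-1}$, namely $\tfrac{3}{8}n^2+O(n)$, confirming \eqref{condition 1}. For \eqref{condition 2}, I choose $\varepsilon\le \gamma_2/2$, where $\gamma_2$ is the constant of Theorem~\ref{Fano-deg-stability}; then every $\mathcal{H}\in\mathcal{H}_n$ is 2-colorable, hence a subgraph of a complete 2-colorable 3-graph $\mathcal{B}(a,n-a)$. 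Since $q$ is strictly monotone under edge addition on connected hypergraphs (by testing~\eqref{eq3} with the strictly positive principal eigenvector), the maximum of $q$ over $\mathcal{H}_n$ is attained by a complete 2-colorable 3-graph. A direct Rayleigh-quotient computation, using that $\mathcal{B}_n$ is regular for even $n$ and near-regular for odd $n$ so that its principal eigenvector is essentially constant on each part, yields
$$q(\mathcal{B}_n)=\frac{6\,e(\mathcal{B}_n)}{n}+O(n)=\frac{6\,\operatorname{ex}_3(n,\mathrm{PG}_2(2))}{n}+O(n),$$
which suffices for \eqref{condition 2} since the allowed error is $O(n^{r-2})=O(n)$.

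Theorem~\ref{criterion} now yields $q(\mathcal{G})\le q(\mathcal{H}_n)$ for every Fano-free $\mathcal{G}$ on $n\ge n_0$ vertices, with equality only if $\mathcal{G}\in\mathcal{H}_n$. To finish, it remains to show $q(\mathcal{H}_n)=q(\mathcal{B}_n)$ and that $\mathcal{B}_n$ uniquely maximizes $q$ over all 2-colorable 3-graphs on $n$ vertices. Proper subgraphs of a connected complete 2-colorable 3-graph have strictly smaller $q$ by strict positivity of the principal eigenvector, so the uniqueness reduces to showing that the balanced $\mathcal{B}_n$ strictly dominates the unbalanced $\mathcal{B}(a,n-a)$ whenever $a<\lfloor n/2\rfloor$.

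This last balance claim is the main obstacle. My plan is to let $\bm{x}$ be the principal eigenvector of $\mathcal{B}(a,n-a)$; by the symmetry within each part, $\bm{x}$ takes only two values, say $x$ on the part of size $a$ and $y$ on the part of size $n-a$. Writing the two eigenequations together with the normalization $ax^3+(n-a)y^3=1$, I would compare $q(\mathcal{B}(a,n-a))$ with $q(\mathcal{B}(a+1,n-a-1))$ via a vertex-transference argument: use the old eigenvector (reassigned on one vertex) as a trial vector in~\eqref{eq3} for the new hypergraph and verify a strict positive gain through a short Taylor expansion, with logarithmic and reciprocal corrections controlled by Facts~\ref{fact1} and~\ref{fact2}. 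This parallels the graph-level signless Laplacian arguments of Zheng, Li, and Li~\cite{zheng2026signless} and He, Jin, and Zhang~\cite{he2013sharp}, now adapted to the 3-uniform setting.
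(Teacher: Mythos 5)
Your outline is the same as the paper's: reduce via Theorem~\ref{criterion}, use Theorem~\ref{Fano-deg-stability} to confine $\mathcal{H}_n$ inside $\mathfrak{B}_n$, argue by edge-addition monotonicity that the maximizer over $\mathfrak{B}_n$ is a complete $2$-colorable $3$-graph $\mathcal{B}(a,n-a)$, and then show the balanced one wins. Your verification of \eqref{condition 1} is fine. The gap is in the remaining steps, which you concede are ``the main obstacle'' and only sketch.

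First, the verification of \eqref{condition 2} as written does not go through. You claim $q(\mathcal{B}_n)=\tfrac{6e(\mathcal{B}_n)}{n}+O(n)$ and say this ``suffices since the allowed error is $O(n^{r-2})=O(n)$.'' But $\sigma$ in \eqref{condition 2} is a \emph{fixed} small constant chosen in advance (with $\varepsilon\gg\sigma>0$), so ``error $O(n)$'' with an unspecified implicit constant does not imply ``error $\le\sigma n$''; you need an error of $o(n)$, and a concrete one. The heuristic that the principal eigenvector is ``essentially constant'' on each part when $n$ is odd is not a quantitative upper bound on $q(\mathcal{B}_n)$. Second, and more seriously, the uniqueness/balance claim is left entirely as a plan: a vertex-transference argument comparing $q(\mathcal{B}(a,n-a))$ to $q(\mathcal{B}(a+1,n-a-1))$. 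Even if carried out, that comparison only gives monotonicity in $a$ and does not, by itself, supply the absolute upper bound on $q(\mathcal{B}_n)$ that \eqref{condition 2} requires; you would still need a separate estimate of $q(\mathcal{B}_n)$ itself.

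The paper closes both gaps at once with a single clean bound (Lemma~\ref{Bn upper bound}): after reducing to complete $2$-colorable $3$-graphs and showing the principal eigenvector is constant on each part, it applies Young's inequality (Fact~\ref{fact6}) with the parameter $t=(n-5)/(2n-4)$ and then Fact~\ref{yusuan} and Bernoulli's inequality to obtain
\[
q(\mathcal{B}'_n)\le\frac{3}{4}n^2-\frac{3}{2}n-\Bigl(a-\frac{n}{2}\Bigr)^2.
\]
Combined with the elementary lower bound from the constant test vector (Lemma~\ref{Bn lower bound}), the penalty term $-(a-n/2)^2$ forces $a=\lfloor n/2\rfloor$ or $\lceil n/2\rceil$, giving $\mathcal{B}'_n=\mathcal{B}_n$ and uniqueness, while the explicit $\tfrac34 n^2-\tfrac32 n$ ceiling shows $q(\mathcal{B}_n)-6\operatorname{ex}_3(n,\mathrm{PG}_2(2))/n\le 3/4<\sigma n$, verifying \eqref{condition 2} with $O(1)$ rather than $O(n)$ error. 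If you want to pursue the vertex-transference route, you would still need such an explicit Rayleigh-quotient upper bound as a companion; the paper's Young's-inequality estimate is the piece your sketch is missing.
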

\addtocounter{theorem}{-1}
\endgroup
We establish a number of lemmas required for the proof of Theorem \ref {Fano}.
\begin{lemma}
\label{Bn lower bound}
For sufficiently large $n$, we have
\[
  q(\mathcal{B}_n) \geq \left\{
  \begin{array}{ll}
    \dfrac{3}{4} n^2 - \dfrac{3}{2} n & \text{if $n$ is even,} \\[4mm]
    \dfrac{3}{4} n^2 - \dfrac{3}{2} n - \dfrac{3}{4} + \dfrac{3}{2 n} & \text{if $n$ is odd.}
  \end{array}
  \right.
\]
\end{lemma}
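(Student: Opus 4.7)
The plan is to apply the variational characterization of the signless Laplacian spectral radius from~\eqref{eq3} with an explicit trial vector. Since the vertices of $\mathcal{B}_n$ play symmetric roles (exactly when $n$ is even, and nearly so when $n$ is odd), the natural choice is the uniform vector $\mathbf{x}$ defined by $x_v = n^{-1/3}$ for every $v \in V(\mathcal{B}_n)$. This satisfies $\|\mathbf{x}\|_3 = 1$, and for each edge $e$ one has $\sum_{v \in e} x_v^3 + 3 x^e = 3/n + 3/n = 6/n$. Substituting into~\eqref{eq3} gives immediately
\[
  q(\mathcal{B}_n) \;\geq\; \frac{6}{n}\, \bigl|E(\mathcal{B}_n)\bigr|.
\]

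It then remains to count $|E(\mathcal{B}_n)|$ under each parity and simplify. Writing $V(\mathcal{B}_n) = X \cup Y$ for the balanced bipartition, every edge has either two vertices in $X$ and one in $Y$ or the reverse, so
\[
  \bigl|E(\mathcal{B}_n)\bigr| = \binom{|X|}{2}|Y| + |X|\binom{|Y|}{2}.
\]
If $n$ is even then $|X|=|Y|=n/2$ and a direct calculation gives $|E(\mathcal{B}_n)| = n^2(n-2)/8$, and the displayed lower bound becomes $\tfrac{3}{4}n^2 - \tfrac{3}{2}n$. If $n$ is odd, setting $a = |X| = (n-1)/2$ and $b = |Y| = (n+1)/2$ and factoring yields $|E(\mathcal{B}_n)| = \tfrac{1}{2} ab (a+b-2) = (n^2-1)(n-2)/8$; expanding $3(n^2-1)(n-2)/(4n)$ then produces exactly $\tfrac{3}{4}n^2 - \tfrac{3}{2}n - \tfrac{3}{4} + \tfrac{3}{2n}$.

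The argument is essentially a one-line Rayleigh-quotient evaluation followed by two bookkeeping calculations, so no serious obstacle is expected; the only point that requires care is the parity split in the edge count. I would also note, as a sanity check, that for even $n$ the full vertex-transitivity of $\mathcal{B}_n$ forces the uniform vector to be the principal eigenvector, so the stated bound is in fact an equality, while for odd $n$ the uniform vector is slightly suboptimal but the resulting bound is already in the sharp form required.
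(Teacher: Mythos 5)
Your proof is correct and follows exactly the paper's approach: take the uniform unit vector $x_v = n^{-1/3}$, plug it into the Rayleigh-quotient characterization \eqref{eq3} to get $q(\mathcal{B}_n) \ge \frac{6}{n}|E(\mathcal{B}_n)|$, and then compute the edge count separately for even and odd $n$. The arithmetic checks out in both parity cases.
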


\begin{proof}
Let $\mathbf{y} = (n^{-1/3}, n^{-1/3}, \dots, n^{-1/3}) \in \mathbb{R}^n$ be a unit vector.
By Theorem \ref{relaigh}, we have the lower bound
\[
  q(\mathcal{B}_n)
  \ge\sum_{e \in E(\mathcal{B}_n)} \bigg( \sum_{v \in e} y_v^3 + 3 y^e \bigg)= \binom{\lceil \frac{n}{2} \rceil}{2} \left\lfloor \frac{n}{2} \right\rfloor  \frac{6}{n} + \binom{\lfloor \frac{n}{2} \rfloor}{2} \left\lceil \frac{n}{2} \right\rceil \frac{6}{n}.
\]
We proceed by distinguishing two cases based on the parity of $n$.
If $n$ is even, this simplifies to
\[
q(\mathcal{B}_n) \geq   \frac{3}{4} n^2 - \frac{3}{2} n.
\]
If $n$ is odd, we have
\begin{align*}
    q(\mathcal{B}_n) &\geq \binom{(n+1)/2}{2} \frac{n-1}{2} \cdot \frac{6}{n} + \binom{(n-1)/{2}}{2} \frac{n+1}{2} \cdot \frac{6}{n} \\[2mm]
    & = \frac{3}{4} n^2 - \frac{3}{2} n - \frac{3}{4} + \frac{3}{2 n}.\qedhere
\end{align*}
\end{proof}

\medskip
Let $\mathfrak{B}_n$ denote the family of all $2$-colorable $3$-graphs on $n$ vertices. Suppose that $\mathcal{B}'_n \in \mathfrak{B}_n$ is a hypergraph that attains the maximum signless Laplacian spectral radius among $\mathfrak{B}_n$. Let $V(\mathcal{B}'_n)=V_1\cup V_2$ be a partition
such that every hyperedge of $\mathcal{B}'_n$ intersects both
 $V_1$ and $V_2$. Set $a=\mid V_1\mid$ and $b=\mid V_2\mid.$

\begin{lemma}\label{Bn upper bound}
    $q(\mathcal{B}'_n)\le  \frac{3}{4} n^2 - \frac{3}{2} n-(a-\frac{n}{2})^2.$
\end{lemma}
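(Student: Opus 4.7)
The plan is to reduce the problem to a two-variable spectral computation. First I would argue that since $q(\cdot)$ is monotone under hyperedge additions, the maximizer $\mathcal{B}'_n$ must equal the complete bipartite $3$-uniform hypergraph $K^{(3)}_{a,b}$ on the partition $V_1\cup V_2$. This hypergraph is connected, so by Theorem~\ref{thm:Perron-Frobenius} its principal eigenvector $\mathbf{x}$ is unique up to scaling; the automorphism symmetry (arbitrary permutations within each part) then forces $x_v=\alpha$ for $v\in V_1$ and $x_v=\beta$ for $v\in V_2$. Write $d_1=b(a-1)+\binom{b}{2}$ and $d_2=a(b-1)+\binom{a}{2}$ for the constant degrees in $V_1$ and $V_2$. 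A direct calculation gives $d_1+d_2=\tfrac{1}{2}(n^2+2ab-3n)=\tfrac{3}{4}n^2-\tfrac{3}{2}n-(a-\tfrac{n}{2})^2$, so it will suffice to prove $q(\mathcal{B}'_n)\le d_1+d_2$. I will assume WLOG $a\le b$; then $d_1\ge d_2$.

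Next I would write out the eigenequations $\mathcal{Q}(\mathcal{B}'_n)\mathbf{x}=q\,\mathbf{x}^{[2]}$ at a vertex of $V_1$ and a vertex of $V_2$. Dividing by $\alpha^2$ and $\beta^2$ respectively yields
\[
q=d_1+\binom{b}{2}r^2+(a-1)b\,r,\qquad q=d_2+\binom{a}{2}r^{-2}+a(b-1)r^{-1},
\]
where $r:=\beta/\alpha$. Writing $h(r)=\binom{b}{2}r^2+(a-1)b\,r$, so that $q=d_1+h(r)$, and eliminating $q$ produces the quartic
\[
P(r):=\binom{b}{2}r^4+(a-1)b\,r^3+(d_1-d_2)r^2-a(b-1)r-\binom{a}{2}=0.
\]
The coefficient sign pattern is $+,+,+,-,-$, so Descartes' rule of signs gives exactly one positive real root, which must be $r^*=\beta/\alpha$, with $P<0$ on $(0,r^*)$ and $P>0$ on $(r^*,\infty)$.

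The crux of the argument is to show $r^*\le r_0$, where $r_0\in(0,1]$ is the unique positive solution of $h(r_0)=d_2$ (existence and uniqueness follow since $h$ is strictly increasing, $h(0)=0$, and $h(1)=d_1\ge d_2$). I would evaluate $P(r_0)$ by substituting $\binom{b}{2}r_0^2=d_2-(a-1)b\,r_0$ from the defining equation of $r_0$ and using the identity $d_2-\binom{a}{2}=a(b-1)$; the computation collapses to
\[
P(r_0)=(1-r_0)\bigl[a(b-1)-(a-1)b\,r_0\bigr].
\]
Both factors are non-negative: the first because $r_0\le 1$, and the second because $(a-1)b\,r_0\le(a-1)b\le a(b-1)$, where the last inequality is exactly $a\le b$. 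Hence $P(r_0)\ge 0$, which combined with the sign behaviour of $P$ forces $r^*\le r_0$, and then $q(\mathcal{B}'_n)=d_1+h(r^*)\le d_1+h(r_0)=d_1+d_2$, as claimed. The hard part will be spotting the clean factorisation of $P(r_0)$: the collapse demands that both the defining equation $h(r_0)=d_2$ and the identity $d_2-\binom{a}{2}=a(b-1)$ be invoked at the right moments, and without that factorisation one is stuck with a quartic inequality that is not obviously non-negative.
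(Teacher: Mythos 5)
Your proposal is correct, and it takes a genuinely different route from the paper's once the reduction to the complete bipartite $3$-graph $K^{(3)}_{a,b}$ with a two-valued principal eigenvector is in place (that reduction step mirrors the paper's Claims~\ref{connect}--\ref{Fano2}). The paper works with the Rayleigh-quotient formula in the variables $u=x^3$, $v=y^3$ subject to the normalization $au+bv=1$: it applies Young's inequality with a carefully tuned exponent $t=\tfrac{n-5}{2n-4}$ to linearize the mixed terms $u^{2/3}v^{1/3}$ and $u^{1/3}v^{2/3}$, then invokes Fact~\ref{yusuan} to reduce to a maximum of two quantities $R_1,R_2$, each of which is bounded by Bernoulli's inequality; the exact value of $t$ is chosen so that the bound collapses to $\tfrac34n^2-\tfrac32n-(a-\tfrac n2)^2$. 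You instead eliminate $q$ from the two eigenequations to obtain a quartic $P(r)=0$ in the ratio $r=\beta/\alpha$, identify the unique positive root $r^*$ via Descartes' rule, and compare it with the auxiliary point $r_0$ defined by $h(r_0)=d_2$; the crux is the factorization $P(r_0)=(1-r_0)\bigl[a(b-1)-(a-1)b\,r_0\bigr]$, which follows cleanly once one uses the defining relation $h(r_0)=d_2$ a second time (the two expressions differ exactly by a multiple of $h(r_0)-d_2$). I verified this identity algebraically, and both factors are nonnegative under the WLOG assumption $a\le b$, so $r^*\le r_0$ and hence $q=d_1+h(r^*)\le d_1+d_2$, which equals the stated bound since $d_1+d_2=ab+\tfrac{n^2}{2}-\tfrac{3n}{2}=\tfrac34n^2-\tfrac32n-(a-\tfrac n2)^2$. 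Your method is more direct and avoids any mysterious tuning parameter, at the cost of being specific to two eigenvector values and requiring the factorization insight; the paper's variational/inequality framework is bulkier here but is the same machinery used elsewhere in the literature and scales more readily to higher uniformities or more parts. One small caveat: the sign-pattern argument for the quartic quietly assumes $a\ge2$ so that $\binom{a}{2}>0$; for $a=1$ the constant term vanishes and one must argue on the residual cubic, but the conclusion still holds.
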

\begin{proof}
We begin the proof with a simple claim.
\begin{claim}\label{connect}
    $\mathcal{B}'_{n}$ is connected.
\end{claim}
\begin{proof}
    We proceed by contradiction. Suppose to the contrary that $\mathcal{B}'_{n}$ is disconnected. Let $B_{1}, B_{2}, \dots, B_{t}$ be the connected components of $\mathcal{B}'_n$, where $t \ge 2$. Let $\mathbf{x}=(x_{1}, x_{2}, \dots, x_{n})$ be a non-negative unit eigenvector of $\mathcal{Q}(\mathcal{B}_n')$ corresponding to  $q(\mathcal{B}'_{n})$. Let $u$ be the vertex such that $x_{u} = \max\{x_1, x_2, \dots, x_n\}$. Then $x_{u}>0$. Without loss of generality, we may assume that
    $q(\mathcal{B}'_{n})=q(B_1)$ and
     $u \in V(B_{1})$. Since $u$ is  not an isolated vertex, we have $|B_1|\ge 2$. Thus, we can choose a vertex $v\in V(B_1)$ such that $v \neq u$. Next, we pick an arbitrary vertex $w\in V(B_2)$. Let $\mathcal{B}^*_{n}$ be the hypergraph obtained from $\mathcal{B}'_{n}$ by adding the hyperedge $\{u,v,w\}$. Clearly, $\mathcal{B}^*_{n} \in \mathfrak{B}_{n}$. Then we have
    \[
        q(\mathcal{B}^*_{n}) - q(\mathcal{B}'_{n})
        \geq x^3_{u} + x^3_{v} + x^3_{w} + 3x_{u}x_{v}x_{w}
        \ge x_u^3 > 0,
    \]
which contradicts the maximality of $\mathcal{B}'_n$.
\end{proof}
\medskip
Since $\mathcal{B}'_n$ is connected, it has a principal eigenvector, denoted by $\mathbf{x} = (x_1, x_2, \dots, x_n)$.
\medskip
\begin{claim}\label{Fano1}
    $\mathcal{B}'_n$ must be a complete  $2$-colorable $3$-graph.
\end{claim}
\begin{proof}
Suppose that $\mathcal{B}'_n$ is not complete. Without loss of generality, we may assume  there are three vertices $v_1, v_2 \in V_1\text{ and } v_3\in V_2$ such that $v_1v_2v_3\notin E(\mathcal{B}'_n)$. Let $\mathcal{B}^*_n$ be the  hypergraph obtained
     by adding the hyperedge $\{v_1,v_2,v_3\}$ to $\mathcal{B}'_n$. Clearly, $\mathcal{B}^*_n\in \mathfrak{B}_n$. By Theorem \ref{relaigh}, we have
\begin{align*}
  q(\mathcal{B}^*_n)
  & \geq \sum_{e \in E(\mathcal{B}^*_n)} \Biggl( \sum_{v \in e} x_v^3 + 3 x^e \Biggr) = \sum_{e \in E(\mathcal{B}'_n)} \Biggl( \sum_{v \in e} x_v^3 + 3 x^e \Biggr) + \Biggl( \sum_{i\in [3]} x_{v_i}^3 + 3 \prod_{i\in [3]}x_{v_i} \Biggr).
\end{align*}
It then follows that
\begin{align*}
     q(\mathcal{B}^*_n) -q(\mathcal{B}'_n) \ge \Biggl( \sum_{i\in [3]} x_{v_i}^3 + 3 \prod_{i\in [3]}x_{v_i} \Biggr)>0,
\end{align*}
where the last inequality holds since $\mathbf{x}$ is the principal eigenvector.
This contradicts the maximality of $q(\mathcal{B}'_n)$.
\end{proof}

\medskip
By Claim \ref{Fano1}, $\mathcal{B}'_n$ is complete, so every vertex in the same class $V_i$ has the same degree. For convenience, we write $d(V_i)$ for the degree of the vertex in $V_i$ for each $i \in[2]$.
\medskip

\begin{claim}\label{Fano2}
For each  $i\in [2]$,  all entries of $\mathbf{x}$ belonging to $V_i$ are equal.
\end{claim}
\begin{proof}
The statement is trivial if $|V_i|=1$,. For the case $|V_i|\ge 2$, let $u$ and $v$ be two distinct vertices in $V_i$. Firstly, by applying  the eigenequation for $q(\mathcal{B}'_n)$ at the vertex $u$, we derive
\[
\big(q\left(\mathcal{B}'_n\right)-d\left(V_i\right)\big) x_u^2
=\sum_{e \in E_u} x^{e \setminus\left\{u\right\}}
=x_v \sum_{\substack{e \in E_u \cap E_v}} x^{e \setminus\{u, v\}}+\sum_{\substack{e \in E_u \setminus E_v}} x^{e \setminus\{u\}}.
\]
Similarly, invoking the eigenequation for $q(\mathcal{B}'_n)$ at the vertex $v$ yields
$$
\big(q\left(\mathcal{B}'_n\right)-d\left(V_i\right)\big) x_v^2
=\sum_{e \in E_v} x^{e \setminus\left\{v\right\}}
=x_u \sum_{\substack{e \in E_v \cap E_u}} x^{e \setminus\{u, v\}}+\sum_{\substack{e \in E_v \setminus E_ u}} x^{e \setminus\{v\}}.
$$

Let $\alpha_{3-i}$ be the sum of the entries of $\mathbf{x}$ in $V_{3-i}$. Note that
\begin{align}
\sum_{\substack{e \in E_v \cap E_u}} x^{e \setminus\{u, v\}}=\alpha_{3-i}.\label{eq:above}
\end{align}
Subtracting the first two equalities, and using \eqref{eq:above} together with the identity
 $\sum_{e \in E_u \setminus E_v} x^{e \setminus\{u\}}=\sum_{e \in E_v \setminus E_ u} x^{e \setminus\{v\}}$,  we obtain that
\[
    (x_u-x_v)\Big(\big(q(\mathcal{B}'_n)-d(V_i)\big)(x_u+x_v)+\alpha_{3-i}\Big)=0.
\]
Since $q(\mathcal{B}'_n)-d(V_i)$, $x_u+x_v$, and $\alpha_{3-i}$ are all positive, we obtain $x_u=x_v.$
\end{proof}

\medskip

In light of Claim \ref{Fano2},  let $x$ and $y$ be the entries of the eigenvector $\mathbf{x}$ corresponding to the vertices in $V_1$ and $V_2$, respectively. Define
$$C(a,b,x,y)=b\binom{a}{2} \big(2 x^3+y^3+3 x^2 y\big)+a\binom{b}{2} \left(2 y^3+x^3+3 x y^2\right)$$
where $a+b=n$ and $ax^3+by^3=1.$
Setting $u=x^3$ and $v=y^3$.  Then $au+bv=1$, and $C(a,b,x,y)$ can be rewritten as:
\[
C'(a, b, u, v)=b\binom{a}{2} \big(2 u+v+3 u^{\frac{2}{3}} v^{\frac{1}{3}}\big)+a\binom{b}{2} \big(2 v+u+3 u^{\frac{1}{3}} v^{\frac{2}{3}}\big),
\]
Clearly, $q(\mathcal{B}'_n)=C'(a,b,u,v).$ By introducing a parameter $t=\frac{n-5}{2n-4} (< \frac{1}{2})$, we deduce that
\begin{align*}
q(\mathcal{B}'_n)&= b\binom{a}{2} \bigg( 2 u+v+\frac{3}{(a^{\frac{2}{3}} b^{\frac{1}{3}})^{t+1}}(a^{t+1} u)^{\frac{2}{3}}(b^{t+1} v)^{\frac{1}{3}} \bigg) \\[2mm]
&\quad +a\binom{b}{2}  \bigg( 2 v+u+\frac{3}{(a^{\frac{1}{3}} b^{\frac{2}{3}})^{t+1}}(a^{t+1} u)^{\frac{1}{3}}(b^{t+1} v)^{\frac{2}{3}} \bigg) \\[2mm]
&\leq b\binom{a}{2}  \bigg( 2 u+v+\frac{1}{(a^{\frac{2}{3}} b^{\frac{1}{3}})^{t+1}}(2 a^{t+1} u+b^{t+1} v) \bigg) \\[2mm]
&\quad +a\binom{b}{2}  \bigg( 2 v+u+\frac{1}{(a^{\frac{1}{3}} b^{\frac{2}{3}})^{t+1}}(a^{t+1} u+2 b^{t+1} v) \bigg),
\end{align*}
where the inequality follows from Fact~\ref{fact6}.
Moreover, the equality holds if and only if $a^{t+1}u=b^{t+1}v$. Rearranging the terms with respect to $u$ and $v$, we have
\begin{align*}
q(\mathcal{B}'_n)\leq \,
  & \Biggl( b\binom{a}{2}  \biggl( 2 + 2 \Bigl( \frac{a}{b} \Bigr)^{\frac{t+1}{3}} \biggr) + a\binom{b}{2}  \biggl( 1 + \Bigl( \frac{a}{b} \Bigr)^{\frac{2(t+1)}{3}} \biggr) \Biggr) u \\[2mm]
  & + \Biggl( b\binom{a}{2}  \biggl( 1 + \Bigl( \frac{b}{a} \Bigr)^{\frac{2(t+1)}{3}} \biggr) + a\binom{b}{2}  \biggl( 2 + 2 \Bigl( \frac{b}{a} \Bigr)^{\frac{t+1}{3}} \biggr) \Biggr) v.
\end{align*}
Define
\begin{align*}
  R_1(a,b,t) &= \frac{1}{a} \Biggl(b\binom{a}{2} b\biggl( 2 + 2 \Bigl( \frac{a}{b} \Bigr)^{\frac{t+1}{3}} \biggr) +a\binom{b}{2} \biggl( 1 + \Bigl( \frac{a}{b} \Bigr)^{\frac{2(t+1)}{3}} \biggr) \Biggr), \\[2mm]
  R_2(a,b,t) &= \frac{1}{b} \Biggl(b\binom{a}{2}  \biggl( 1 + \Bigl( \frac{b}{a} \Bigr)^{\frac{2(t+1)}{3}} \biggr) + a\binom{b}{2}  \biggl( 2 + 2 \Bigl( \frac{b}{a} \Bigr)^{\frac{t+1}{3}} \biggr) \Biggr).
\end{align*}
By Fact \ref{yusuan}, we obtain \scalebox{0.95}{$q(\mathcal{B}'_n) \le \max \{ R_i(a,b,t) \mid i \in \{1,2\} \}$}. Noting that $R_1(a,b,t) =   R_2(b,a,t)$, it follows immediately that
$$q(\mathcal{B}'_n)\le\max\{  R_1(a,b,t),  R_1(b,a,t)\}.$$
We now proceed to compute calculate $R_1(a, b, t)$. Substituting $b = n-a$ into the expression for $R_1(a, b, t)$,  we deduce that
\begin{align*}
  R_1(a,b,t)
  &= \frac{n-a}{2} \Biggl( 2(a-1) \biggl( 1 + \Bigl( \frac{a}{n-a} \Bigr)^{\frac{t+1}{3}} \biggr) + (n-a-1) \biggl( 1 + \Bigl( \frac{a}{n-a} \Bigr)^{\frac{2(t+1)}{3}} \biggr) \Biggr) \\[2mm]
  &\le \frac{n-a}{2} \Biggl( 2(a-1) \biggl( 2 + \frac{t+1}{3} \Bigl( \frac{a}{n-a}-1 \Bigr) \biggr)\\
  &\   \ \ \ + (n-a-1) \biggl( 2 + \frac{2(t+1)}{3} \Bigl( \frac{a}{n-a}-1 \Bigr) \biggr) \Biggr) \\[2mm]
  &= (n-a)(n+a-3) + \frac{t+1}{3} (n-2)(2 a-n) \\[2mm]
  &= \frac{3}{4} n^2 - \frac{3}{2} n - \left( a - \frac{n}{2} \right)^2,
\end{align*}
where the inequality follows from Bernoulli's inequality, namely, $z^\lambda=\left(1+(z-1)\right)^\lambda\le 1+\lambda(z-1)$ for any $z>0, 0\le \lambda\le 1,$ and the last equality holds as $t = \frac{n-5}{2n-4}$.  By analogous reasoning, we  conclude that
$$R_1(b,a,t)\le   \frac{3}{4} n^2-\frac{3}{2} n-\left(b-\frac{n}{2}\right)^2= \frac{3}{4} n^2-\frac{3}{2} n-\left(a-\frac{n}{2}\right)^2.$$
This completes the proof.
\end{proof}

\medskip

\begin{lemma}\label{new}
   For sufficiently large $n$, we have $\mathcal{B}'_n=\mathcal{B}_n$.
\end{lemma}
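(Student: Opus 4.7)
The plan is to combine the lower bound on $q(\mathcal{B}_n)$ from Lemma~\ref{Bn lower bound} with the upper bound on $q(\mathcal{B}'_n)$ from Lemma~\ref{Bn upper bound}, and exploit the defining maximality of $\mathcal{B}'_n$ inside $\mathfrak{B}_n$. Since $\mathcal{B}_n \in \mathfrak{B}_n$, we automatically have $q(\mathcal{B}'_n) \geq q(\mathcal{B}_n)$. Chaining the three inequalities gives
\[
  q(\mathcal{B}_n) \;\leq\; q(\mathcal{B}'_n) \;\leq\; \tfrac{3}{4} n^2 - \tfrac{3}{2} n - \Bigl(a - \tfrac{n}{2}\Bigr)^2,
\]
so all the work is to show that the deficit $(a-n/2)^2$ is forced to be small enough that $a$ must equal the balanced value.

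I would then split into two cases according to the parity of $n$. If $n$ is even, Lemma~\ref{Bn lower bound} gives $q(\mathcal{B}_n) \geq \frac{3}{4} n^2 - \frac{3}{2} n$, and the displayed chain immediately yields $(a - n/2)^2 \leq 0$, i.e., $a = n/2$. Since Claim~\ref{Fano1} already forces $\mathcal{B}'_n$ to be the complete $2$-colorable $3$-graph on the parts $V_1, V_2$, this equality determines $\mathcal{B}'_n = \mathcal{B}_n$. If $n$ is odd, Lemma~\ref{Bn lower bound} gives $q(\mathcal{B}_n) \geq \frac{3}{4}n^2 - \frac{3}{2}n - \frac{3}{4} + \frac{3}{2n}$, so the chain produces
\[
  \Bigl(a - \tfrac{n}{2}\Bigr)^2 \;\leq\; \tfrac{3}{4} - \tfrac{3}{2n}.
\]
Because $n$ is odd, $a - n/2$ is a half-integer, so $(a - n/2)^2 \in \{1/4, 9/4, 25/4, \ldots\}$; for $n$ large enough, $\frac{3}{4} - \frac{3}{2n} < \frac{9}{4}$, which forces $(a - n/2)^2 = 1/4$, i.e., $a \in \{(n-1)/2, (n+1)/2\}$. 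Again by Claim~\ref{Fano1}, this means $\mathcal{B}'_n$ is the complete $2$-colorable $3$-graph with parts of sizes $\lfloor n/2 \rfloor$ and $\lceil n/2 \rceil$, which is exactly $\mathcal{B}_n$.

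There is essentially no obstacle here beyond the arithmetic of matching the lower and upper bounds; all the genuine work — that $\mathcal{B}'_n$ is connected, complete, and has part-constant eigenvector entries, together with the explicit quadratic deficit $(a - n/2)^2$ — has already been extracted in Claims~\ref{connect}, \ref{Fano1}, \ref{Fano2} and in Lemma~\ref{Bn upper bound}. The only mild subtlety is handling the odd case: one must notice that half-integer values of $a - n/2$ make the jump from $1/4$ to $9/4$, which is precisely the gap that the sharper lower bound $-\frac{3}{4} + \frac{3}{2n}$ (rather than a cruder estimate) was designed to fit inside. Once this observation is in place, the proof of Lemma~\ref{new} reduces to a two-line sandwich followed by a parity check.
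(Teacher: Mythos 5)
Your proposal is correct and matches the paper's proof exactly: both combine Lemma \ref{Bn lower bound}, Lemma \ref{Bn upper bound}, and the maximality of $\mathcal{B}'_n$ to sandwich $q(\mathcal{B}'_n)$ and force $(a-n/2)^2 \le 0$ when $n$ is even, and then a half-integer parity check in the odd case to force $a = (n\pm 1)/2$. The only cosmetic difference is that the paper phrases the odd case as a contradiction starting from $a \ge (n+3)/2$ or $a \le (n-3)/2$, while you argue directly that $(a-n/2)^2 < 9/4$ leaves only $1/4$.
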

\begin{proof}
We discuss two cases based on the parity of $n$. If $n$ is even, then by applying Lemmas \ref{Bn lower bound} and \ref{Bn upper bound}, we must have $a=n/2.$ So $\mathcal{B}'_n=\mathcal{B}_n$. If $n$ is odd, suppose for contradiction that $\mathcal{B}'_n\neq \mathcal{B}_n$. It then follows that
  $a\ge (n+3)/2$ or $a\le (n-3)/2$.  Using Lemmas \ref{Bn lower bound} and \ref{Bn upper bound}, we obtain
$$ \frac{3}{4} n^2 - \frac{3}{2} n - \frac{3}{4} + \frac{3}{2 n}\leq q(\mathcal{B}'_n)\leq \frac{3}{4} n^2-\frac{3}{2} n-\frac{9}{4},$$
which is  a contradiction. We therefore conclude that  $\mathcal{B}'_n=\mathcal{B}_n$ in both cases.
\end{proof}

Let $\mathcal{H}_n$ denote the family of all $n$-vertex $\mathrm{PG}_2(2)$-free $3$-graph with minimum degree greater than $(3/8-\varepsilon) n^{2}$.
Since $\varepsilon>0$ is sufficiently small, by Theorem \ref{Fano-deg-stability}, $\mathcal{H}_n\subseteq \mathfrak{B}_n$.
Note that $\mathcal{B}_n \in \mathcal{H}_n \subseteq \mathfrak{B}_n$. Combining this with Lemma \ref{new}, we have
$$q(\mathcal{B}_n) \leq q(\mathcal{H}_n)\leq q(\mathfrak{B}_n)= q(\mathcal{B}'_n) = q(\mathcal{B}_n).$$
So $ q(\mathcal{H}_n) = q(\mathcal{B}_n)$.

Now, we are ready to prove Theorem \ref{Fano}.

\medskip

\noindent\textbf{Proof of Theorem \ref{Fano}} Let $\varepsilon$ and $\sigma$ be sufficiently small numbers such that $\varepsilon\gg\sigma>0$ as described in Theorem \ref{criterion}. It suffices to show the two conditions are satisfied.
For the condition (\ref{condition 1}), given that  $\pi(\mathrm{PG}_2(2))=3/4$ and $r=3$, we derive \scalebox{0.95}{$\pi(\mathcal{F})n^{r-1}/(r-1)!=3n^2/8$}. A direct calculation  shows that
\begin{align}
 \left|\operatorname{ex}_3(n, \mathrm{PG}_2(2))-\operatorname{ex}_3(n-1, \mathrm{PG}_2(2))-\frac{3n^2}{8}\right|\le \sigma n^2.\notag
\end{align}
So the condition (\ref{condition 1}) of Theorem \ref{criterion} is satisfied.

Next, we verify that condition (\ref{condition 2}) of Theorem \ref{criterion}   holds.
A direct calculation yields
\begin{align*}
    \frac{6\mathrm{ex}_3(n, \mathrm{PG}_2(2))}{n} = \left\{
   \begin{array}{ll}
      \dfrac{3}{4} n^2 - \dfrac{3}{2} n & \text{if } n \text{ is even,} \\[3mm]
      \dfrac{3}{4} n^2 - \dfrac{3}{2} n - \dfrac{3}{4} + \dfrac{3}{2 n} & \text{if } n \text{ is odd.}
   \end{array}
   \right.
\end{align*}
By the fact $\mathcal{B}_n \in\mathcal{H}_n$ and Lemma \ref{Bn lower bound}, it suffices to verify that
\begin{align*}
   q(\mathcal{H}_n) - \frac{6\mathrm{ex}_3(n, \mathrm{PG}_2(2))}{n} \leq \sigma n.
\end{align*}
Note that
$$
   q(\mathcal{H}_n) -  \frac{6\mathrm{ex}_3(n, \mathrm{PG}_2(2))}{n} = q(\mathcal{B}_n) - \frac{6\mathrm{ex}_3(n, \mathrm{PG}_2(2))}{n}.
$$
Using Lemmas \ref{Bn upper bound} and \ref{new}, we obtain
$$
 q(\mathcal{B}_n) -  \frac{6\mathrm{ex}_3(n, \mathrm{PG}_2(2))}{n} \le \frac{3}{4} n^2 - \frac{3}{2} n - \frac{6\mathrm{ex}_3(n, \mathrm{PG}_2(2))}{n} \le \frac{3}{4} < \sigma n.
$$
Therefore, condition (\ref{condition 2}) of Theorem \ref{criterion} is satisfied.

 Let $\mathcal{G}$ be an $n$-vertex $\mathrm{PG}_2(2)$-free $3$-graph. By  Theorem \ref{criterion}, we have
\[
q(\mathcal{G}) \le q(\mathcal{H}_n)  = q(\mathcal{B}_n),
\]
 Moreover, equality $q(\mathcal{G}) = q(\mathcal{H}_n)$ holds only if $\mathcal{G} \in \mathcal{H}_n$.
Furthermore, since $\mathcal{B}_n$ is the unique maximizer in $\mathfrak{B}_n$, and given the  inclusion $\mathcal{B}_n \in \mathcal{H}_n \subseteq \mathfrak{B}_n$, it follows that  that $\mathcal{B}_n$ is also the unique maximizer in $\mathcal{H}_n$.
We thus conclude that $q(\mathcal{G}) \leq q(\mathcal{B}_n)$, with equality if and only if $\mathcal{G} = \mathcal{B}_n$. The proof is complete.\qed

\section{Concluding remarks}

In this paper, we establish a general criterion and apply it to the signless Laplacian spectral Tur\'an problem for the Fano plane.
Our findings indicate that any hypergraph Tur\'an problem that has the degree stability property and whose extremal constructions satisfy certain assumptions admits a corresponding
signless Laplacian spectral result.
Potential future research directions include applying this framework to other forbidden configurations such as expansions of complete graphs, generalized fans, and cancellative hypergraphs.

\end{document}